\newtheorem{Thm}[equation]{Theorem}
\newtheorem{Lem}[equation]{Lemma}
\newtheorem{Cor}[equation]{Corollary}
\begin{document}

\title{ON THE CLASSIFICATION OF A LARGE SET OF RATIONAL $3$-TANGLE DIAGRAMS}
\author{ Bo-hyun Kwon}
\date{Friday, February 19, 2015}
\maketitle

\begin{abstract}

We note that a rational $3$-tangle diagram is obtained from a combination of four generators. There is an algorithm to distinguish two rational $3$-tangle diagrams up to isotopy.(\cite{3}) However, there is no perfect classification about  rational $3$-tangle diagrams such as the classification of rational $2$-tangle diagrams corresponding to rational numbers.
In this paper, we classify a large set of rational $3$-tangles which are generated by only three generators.
\end{abstract}

\section{Introduction}

A \textit{3-tangle} is the disjoint union of 3 properly embedded arcs in the unit 3-ball. The embed-
ding must send the endpoints of the arcs to 6 fixed points on the ball's boundary.
Without loss of generality, consider the fixed points on the 3-ball boundary to lie on a
great circle. The tangle can be arranged to be in general position with respect to the projection onto the 
at disk in the $xy$-plane bounded by the great circle. The projection then gives
us a tangle diagram where we make note of over and undercrossings as with knot diagrams.
Then we say that a 3-tangle $\alpha_1\cup\alpha_2\cup\alpha_3$
 in $B^3$ is \textit{rational} if there exists a homeomorphism
of pairs
$\bar{h}:(B^3,\alpha_1,\alpha_2,\alpha_3)\rightarrow(D^2\times I,\{x_1,x_2,x_3\}\times I)$.
Also, we define that two rational 3-tangles, $\mathbb{T},\mathbb{T}'$, in $B^3$ are \textit{isotopic},  denoted by $\mathbb{T}\approx \mathbb{T}'$, if there is an orientation-preserving self-homeomorphism $h:(B^3,\mathbb{T})\rightarrow (B^3,\mathbb{T}')$
that is the identity map on the boundary. Also we say that two rational $3$-tangle diagrams $T$ and $T'$ are \textit{isotopic}, denoted by $T\sim T'$, if there exist two rational $3$-tangles $\mathbb{T}$ and $\mathbb{T}'$ so that $\mathbb{T}\approx \mathbb{T}'$ and $T$ and $T'$ are projections of $\mathbb{T}$ and $\mathbb{T}'$ respectively.\\

In 1970,  Conway~\cite{7} introduced tangles and he proved that two rational 2-tangles
are isotopic if and only if they have the same rational number.  However, there is no similar invariant known which classifies rational $3$-tangles.   Currently, the author~\cite{3} found an algorithm to check whether or not two rational 3-tangle diagrams are isotopic  by using a modified version of Dehn's method for classifying simple closed curves on surfaces. 
 \\

\begin{figure}[htb]
\includegraphics[scale=.7]{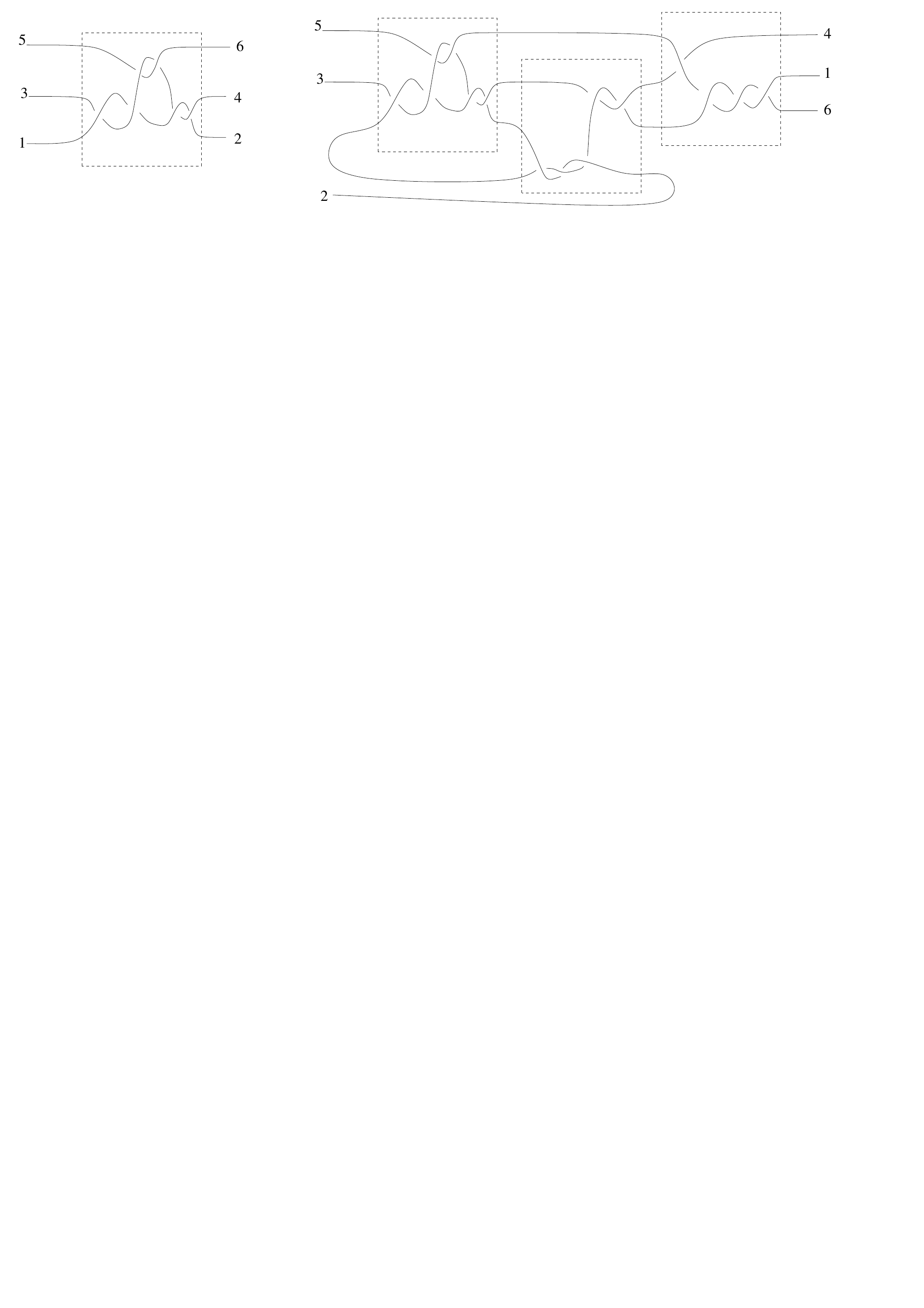}
\vskip -480pt
\caption{Two examples in $\mathcal{T}_{\{\hat{\sigma_1},\hat{\sigma_2},\hat{\sigma_3}\}}$}
\label{p1}
\end{figure}
We note that the rational 3-tangles are obtained  by four generators extended from four half Dehn twists on $\Sigma_{0,6}$. (Refer to~\cite{3}.) Especially,  if we use two proper generators of them then we  obtain a family of all the rational $3$-tangle diagrams presented by the braids group with three strings, $\mathbb{B}_3$, such as the first diagram in Figure~\ref{p1}. Recently, Cabrera-Ibarra (\cite{5}, \cite{6}) found a pair of invariants which is defined for all rational 3-tangles. Each invariant is a $3\times 3$ matrix with complex number entries.
  The pair of invariants classifies the elements of six special sets of rational 3-tangles each of which contains the braid 3-tangles.
 In this paper, we are investigating  a large set of rational 3-tangles,  $\mathcal{T}_{\{\hat{\sigma_1},\hat{\sigma_2},\hat{\sigma_3}\}}$,  which are generated by the three generators $\hat{\sigma_1},\hat{\sigma_2},\hat{\sigma_3}$, where $\hat{\sigma_1},\hat{\sigma_2},\hat{\sigma_3}$ are extensions of the three half Dehn twists $\sigma_1,\sigma_2,\sigma_3$ on $\Sigma_{0,6}$. (See Figure~\ref{p6}.) We note that our family of rational $3$-tangle diagrams contains at least five of the special sets by Cabrera-Ibarra. 
     We also note that our set contains lots of rational 3-tangle diagrams which are not in the six special sets.\\

The following is the main corollary of this paper that classifies
  $\mathcal{T}_{\{\hat{\sigma_1},\hat{\sigma_2},\hat{\sigma_3}\}}$. We will prove this corollary in Section 3.\\

\textbf{Corollary~\ref{C}} \hskip 10pt Suppose that two rational 3-tangle diagrams $T_F, T_G\in \mathcal{T}_{\{\hat{\sigma_1},\hat{\sigma_2},\hat{\sigma_3}\}}$.  $T_F\sim T_G$ if and only if  $[F(\partial E_1)]= [G(\partial E_1)]$ and $(F(\epsilon_2)\cup F(\epsilon_3),B^3)\approx (G(\epsilon_2)\cup G(\epsilon_3),B^3)$.\\

\section{Dehn parameterization of $\mathcal{C}$}\label{S2}

\begin{figure}[htb]
\includegraphics[scale=.35]{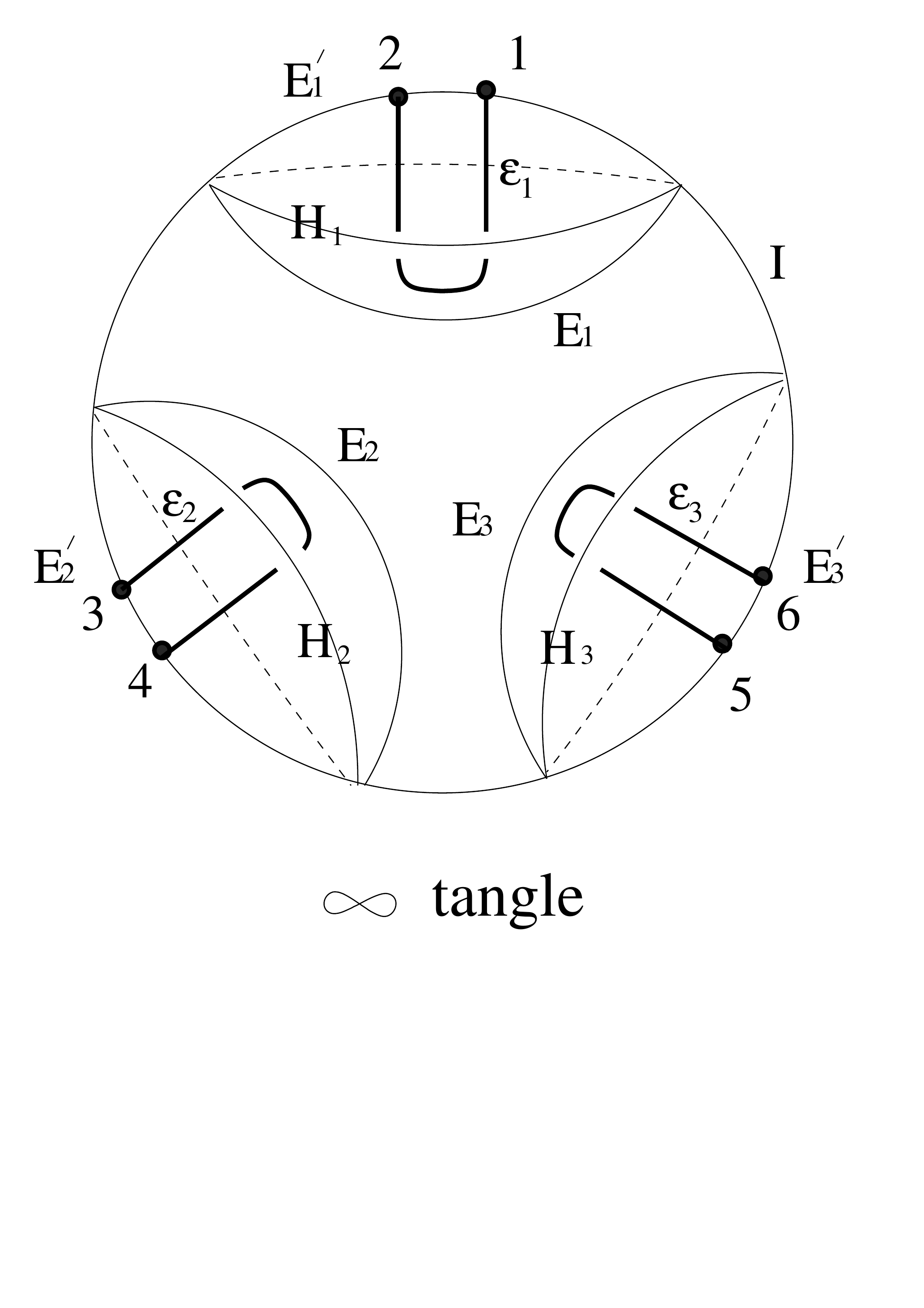}
\vskip -80pt
\caption{}
\label{p2}
\end{figure}

Let $\Sigma_{0,6}$ be the six punctured sphere and $(B^3,\epsilon)=(B^3,\epsilon_1\cup\epsilon_2\cup\epsilon_3)$  be the trivial tangle (or $\infty$ tangle) as in Figure~\ref{p2}. Also, let $E_i$ the the three disjoint essential disks in $B^3-\epsilon$ as in Figure~\ref{p2}. Then we have the three $2$-punctured disks $E_i'$ in $\Sigma_{0,6}$ so that $\partial E_i'=\partial E_i$. Then we note that $E_i\cup E_i'$ bounds the ball $H_i$ which only contains $\epsilon_i$. 
Let $E=E_1\cup E_2\cup E_3$ and $E'=E_1'\cup E_2'\cup E_3'$.
Also, let $\partial E=\partial E'=\partial E_1\cup \partial E_2\cup\partial E_3$. Then we can get the pair of pants $I=\overline{(\Sigma_{0,6}-E')}$.

\begin{figure}[htb]

\includegraphics[scale=.5]{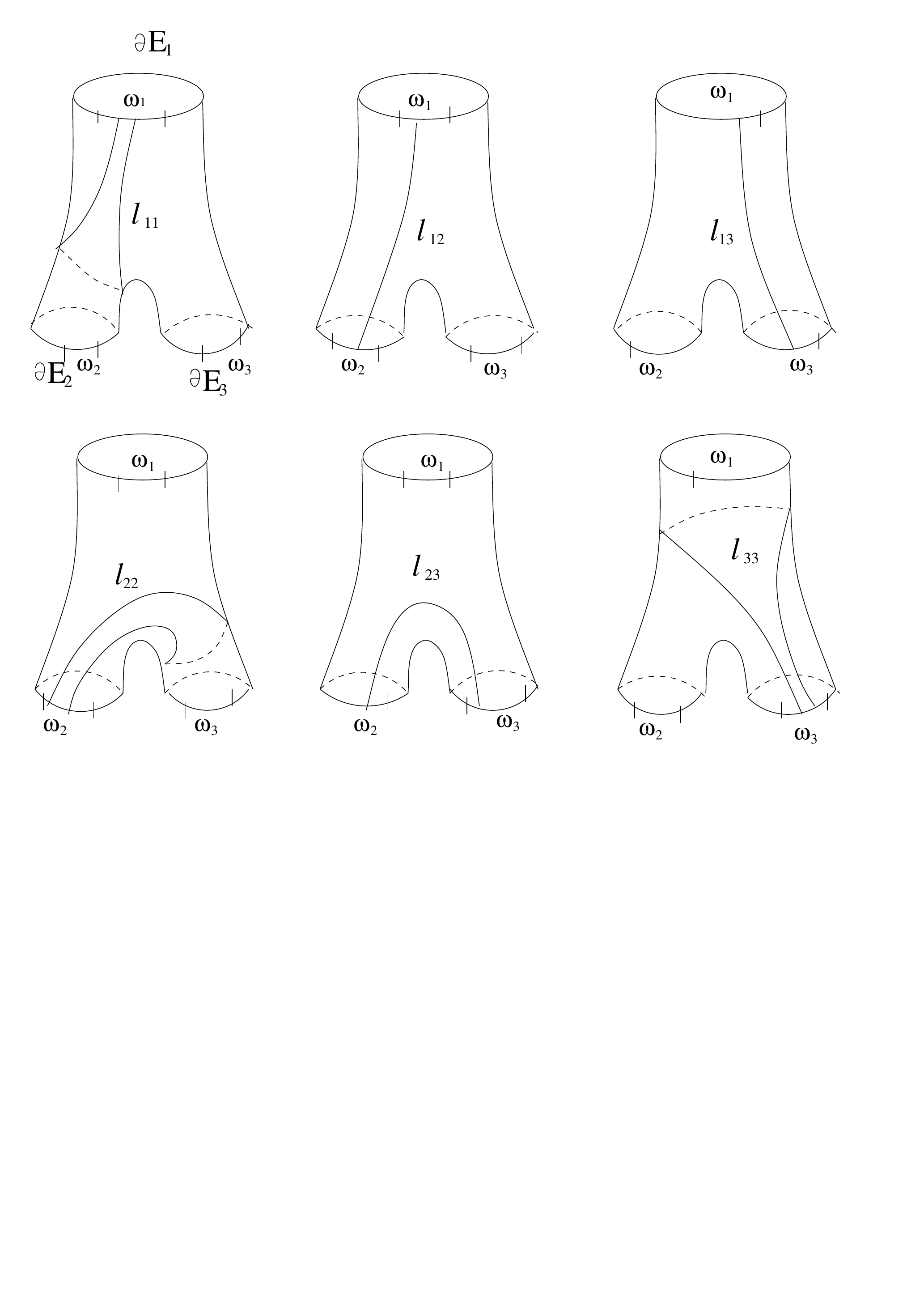}
\vskip -175pt
\caption{Standard arcs $l_{ij}$}
\label{E1}
 \end{figure}

Let $\gamma$ be a simple closed curve in $\Sigma_{0,6}$. 
\\

Figure~\ref{E1}  shows $\textit{standard arcs}$ $l_{ij}$ in the pair of pants $I$. We notice that we can isotope $\gamma$ into $\delta$ in $\Sigma_{0,6}$ so that each component of $\delta\cap I$ is isotopic to one of the standard arcs and $\delta\cap \partial E_i\subset \omega_i$. 
Then we say that subarc $\alpha$ of $\delta$ $\textit{is carried by}$ $l_{ij}$ if some component of $\alpha\cap I$ is isotopic to $l_{ij}$. The closed arc $\omega_i\subset \partial E_i$ is called a $\textit{window}$. \\

 Let $I_i=|\delta\cap \omega_i|$. 
Then $\delta$ can have many parallel arcs which are the same type in $I$. Let $x_{ij}$ be the number of parallel arcs of the type $l_{ij}$ which is called the $weight$ of $l_{ij}.$ \\

\begin{figure}[htb]
\begin{center}
\includegraphics[scale=.8]{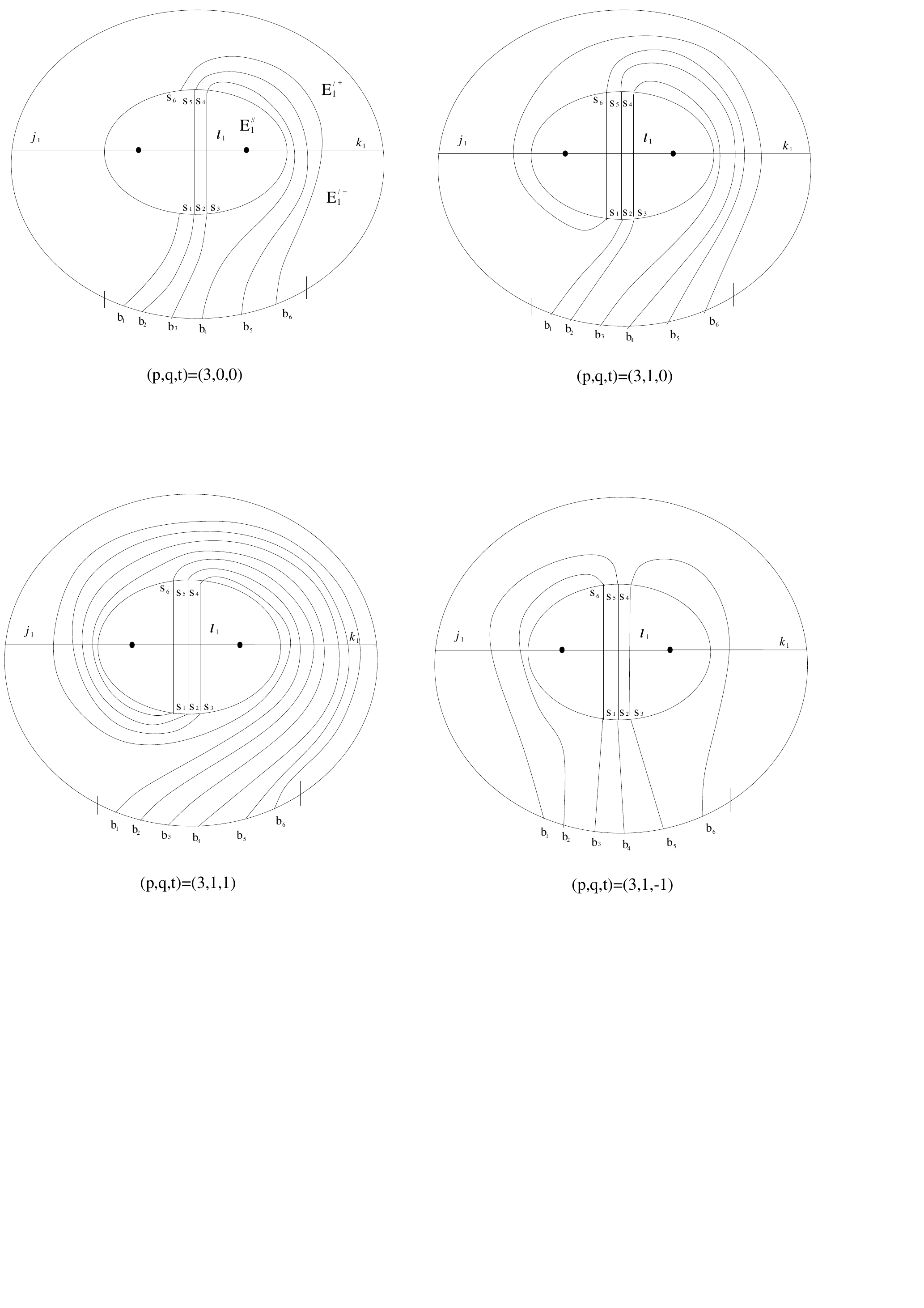}
\end{center}
\vskip -200pt
\caption{}
\label{E2}
 \end{figure}
 
We define a bigon as follows. For two simple subarcs $\lambda$ and $\mu$ of a collection $K$ of simple closed curves in a surface $\Sigma$, $(\Delta,\lambda,\mu)$ is a \emph{bigon} in a surface $\Sigma$ if $\lambda\cup\mu$ bounds a disk $\Delta$ in $\Sigma$ and $\partial \lambda=\partial \mu=\lambda\cap \mu$. Then we say that a bigon $\Delta$ is \emph{innermost} if $\Delta^\circ\cap K=\emptyset$.\\

Now, consider $E_1'$.   Let $j_1$ and $k_1$ be the simple arcs as in Figure~\ref{E2}.
We assume that $\partial E_1'\cup\delta \cup j_1\cup k_1\cup l_1$ has no bigon in $\Sigma_{0,6}$. We note that $j_1\cup k_1\cup l_1$ separates $E_1'$ into two semi-disks ${E_1'}^+$ and ${E_1'}^-$ as in Figure~\ref{E2}.
Let ${u}_1^+$ be the number of subarcs of $\delta$ from $l_1$ to $j_1$ in  ${E_1'}^+$. Also, let ${v}_1^+$ be the number of subarcs of $\delta$ from $l_1$ to $k_1$ in ${E_1'}^+$ and let ${w}_1^+$ be the number of subarcs of $\delta$ from $j_1$ to $k_1$ in ${E_1'}^+$.  
Let $m_1=|\delta\cap j_1|$ and $n_1=|\delta\cap k_1|$ in $E_1'$.  For example, in the third diagram of Figure~\ref{E2}, we have ${u}_1^+=0$, ${v}_1^+=3$, ${w}_1^+=4$, $m_1=4$ and $n_1=7$.\\

 We notice that each component of $\delta\cap E_1'$ meets $l_1$ exactly once. Also, we know that each such component is essential in 
$E_1'$. Let $E_1''$ be the nested $2$-punctued disk in $E_1'$ as in Figure~\ref{E2}. We can isotope $\delta$ in $E_i'$ so that all the components of $\delta\cap E_i''$ are pairwise parallel. Then, let $I'=\Sigma_{0,6}-\{E_1''\cup E_2''\cup E_3''\}$. \\

The components of $\delta\cap E_1'$ are determined by three parameters $p_1,q_1,t_1$ as in Figure~\ref{E2},
where $p_1=\min\{|\delta'\cap l_1||\delta'\sim \delta$ in $\Sigma_{0,6}\}$, $q_1\in \mathbb{Z},~0\leq q_1<p_1$. 
In order to define $q_1$ and $t_1$, consider $m_1$ and $n_1$.
 Then we know that $u_1^++v_1^+=p_1$. 
So, $m_1-n_1=(u_1^++w_1^+)-(v_1^++w_1^+)=u_1^+-v_1^+$. Therefore, $-p_1=-u_1^+-v_1^+\leq u_1^+-v_1^+=m_1-n_1=u_1^+-v_1^+\leq u_1^++v_1^+=p_1$.
So, we know $-p_1\leq m_1-n_1\leq p_1$. Now, we define $q_1$ and $t_1$ as follows. If $n_1-m_1=p_1$ then $q_1\equiv m_1$ (mod $p_1$) and $0\leq q_1<p_1$, and $t_1={m_1-q_1\over p_1}$ and if $-p_1\leq n_1-m_1<p_1$ then $q_1\equiv -m_1$ (mod $p_1$) and $0\leq q_1<p_1$, and $t_1={-m_1-q_1\over p_1}$.
Then $t_1$ is called the $twisting$ $number$ in $E_1'$. 
Also, let $(p_1,q_1,t_1)$ be the three parameters to determine the arcs in $E_1'$. Similarly, we have the three parameters $(p_i,q_i,t_i)$ for $E_i'$ ($i=2,3$). 
Then $\gamma$ is determined by a sequence of nine parameters $(p_1,q_1,t_1,p_2,q_2,t_2,p_3,q_3,t_3)$ by Lemma~\ref{T51}.

\begin{figure}[htb]
\begin{center}
\includegraphics[scale=.7]{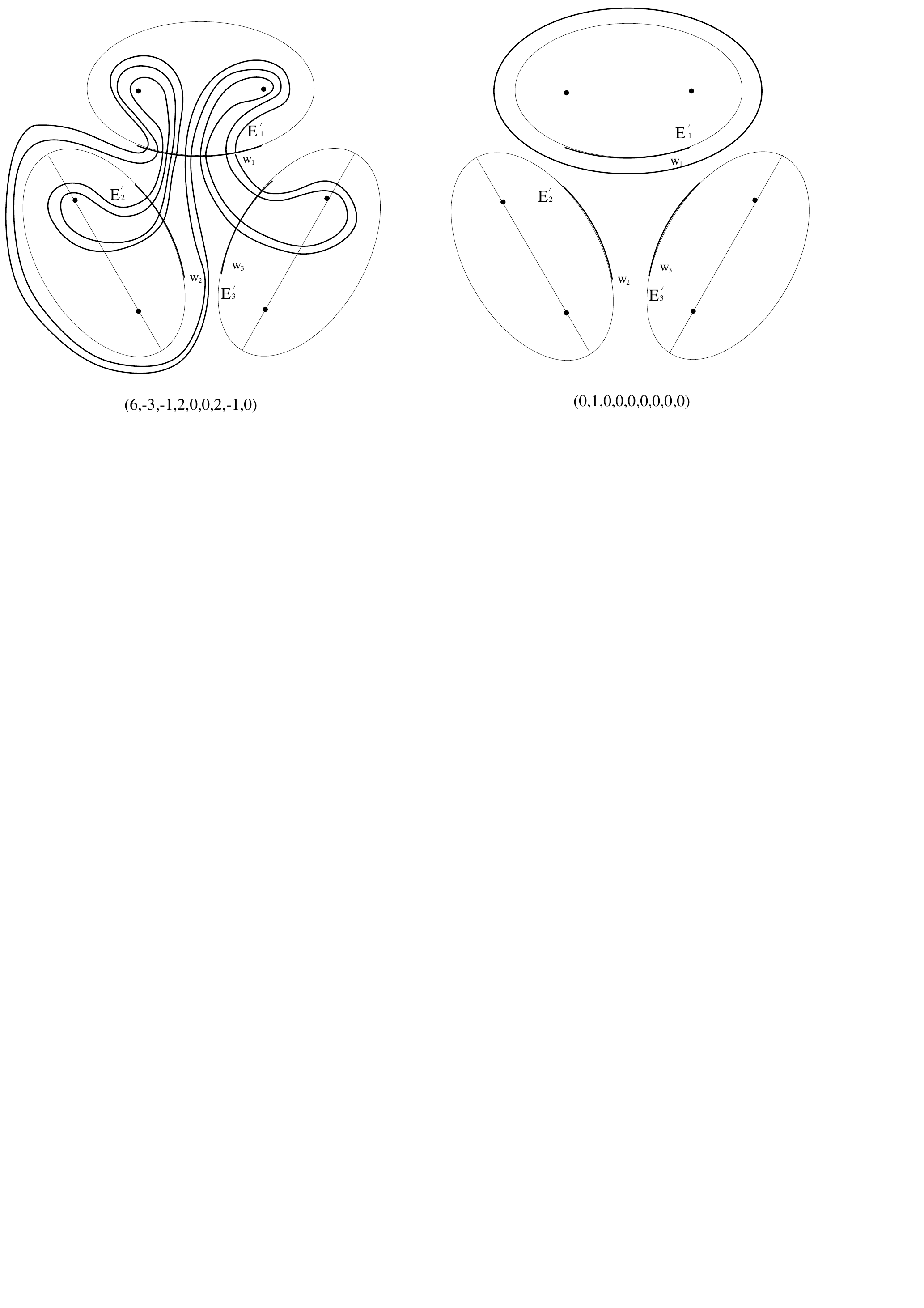}
\end{center}
\vskip -400pt
\caption{Essential curves obtained by a sequece of nine parameters}
\label{E3}
\end{figure}

\begin{Lem}[\cite{3}]\label{T51}

From $I_i$ ($i=1,2,3$), we have $x_{ij}=I_j,~ x_{ik}=I_k$ and $x_{ii}={I_i-I_j-I_k\over 2}$.
\end{Lem}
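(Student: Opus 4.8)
The plan is to prove this by a direct endpoint count in the pair of pants $I$, supplemented by a disjointness argument that rules out the unwanted arc types. First I would use the reduction already set up in the paper: after isotoping $\gamma$ to $\delta$ so that every component of $\delta\cap I$ is isotopic to a standard arc and there are no bigons, each such component is an essential arc of the pair of pants and hence is isotopic to exactly one of the six types $l_{11},l_{22},l_{33},l_{12},l_{13},l_{23}$, where $l_{ii}$ is the essential self-arc with both endpoints on the window $\omega_i$ (separating the other two boundary circles) and $l_{ij}$ with $i\neq j$ joins $\omega_i$ to $\omega_j$. Since $l_{ii}$ contributes two endpoints to $\omega_i$ and each $l_{ij}$ one endpoint to each of $\omega_i,\omega_j$, counting endpoints on each window gives, for $\{i,j,k\}=\{1,2,3\}$, the relation $I_i=2x_{ii}+x_{ij}+x_{ik}$, where I use $x_{ij}=x_{ji}$. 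These three equations alone underdetermine the six weights, so the content of the lemma is to pin down which arc types actually occur.

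The heart of the argument is to show that, in the regime where the window $\omega_i$ dominates, i.e. $I_i\ge I_j+I_k$ (which is exactly the condition making $x_{ii}=\tfrac{I_i-I_j-I_k}{2}$ a nonnegative integer), the other three weights vanish: $x_{jk}=x_{jj}=x_{kk}=0$. The key geometric fact is that a bigon-free system of essential arcs on a pair of pants can carry self-arcs around at most one boundary component, and that a single self-arc $l_{ii}$ forces every other arc to lie in one of the two pieces into which it cuts $I$. Concretely, I would cut $I$ along an innermost $l_{ii}$: because this arc separates $\omega_j$ from $\omega_k$, the two resulting pieces are annuli, one containing $\omega_j$ and the other $\omega_k$. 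In an annulus every essential arc runs from one boundary to the other, and any self-arc is boundary-parallel, hence inessential; so no arc can join $\omega_j$ to $\omega_k$ and no essential self-arc can sit at $\omega_j$ or $\omega_k$. The mutual parallelism of the $l_{ii}$ arcs lets this conclusion propagate to the whole system, giving $x_{jk}=x_{jj}=x_{kk}=0$. I expect this disjointness/innermost-arc analysis to be the main obstacle, since one must argue not merely that the dominance hypothesis is \emph{consistent} with self-arcs at $\omega_i$ but that it genuinely \emph{forces} them while excluding self-arcs elsewhere.

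Once the vanishing is established the computation is immediate. As the only arcs meeting $\omega_j$ are of type $l_{ij}$, all $I_j$ endpoints on $\omega_j$ are endpoints of such arcs, so $x_{ij}=I_j$; symmetrically $x_{ik}=I_k$. Substituting into $I_i=2x_{ii}+x_{ij}+x_{ik}$ and solving yields $x_{ii}=\tfrac{I_i-I_j-I_k}{2}$, which is the claimed formula. The parity needed for this to be an integer is automatic, since $I_1+I_2+I_3$ equals twice the total number of arcs and $I_i-I_j-I_k$ has the same parity. Thus the only nonroutine part of the proof is the middle paragraph; the first and third paragraphs are bookkeeping once the arc types actually present have been identified.
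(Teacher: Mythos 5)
The paper itself gives no proof of this lemma --- it is quoted from the author's earlier paper \cite{3} --- so there is no internal argument to compare yours against. Judged on its own, your proposal is the standard Dehn--Thurston pair-of-pants computation and is essentially correct: the endpoint count $I_i=2x_{ii}+x_{ij}+x_{ik}$ is right, the parity remark is right, and the key exclusion step (an $l_{ii}$ arc cuts $I$ into two annuli, in an annulus every arc with both endpoints on one boundary circle is boundary-parallel, hence $x_{jk}=x_{jj}=x_{kk}=0$ whenever $x_{ii}>0$) is a correct and complete argument. This also matches how the paper uses the fact elsewhere, e.g.\ the remark in Lemma~\ref{T1} that $l_{ii}$ and $l_{jj}$ cannot coexist.

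The one step you flag but do not actually carry out is showing that the dominance hypothesis \emph{forces} $x_{ii}>0$ rather than merely permitting it; without this, the case $x_{ii}=0$ with $I_i>I_j+I_k$ is not excluded and your cutting argument has nothing to cut along. This is easy to close and does not need any further innermost-arc analysis: subtracting the three endpoint equations gives $I_i-I_j-I_k=2\bigl(x_{ii}-x_{jj}-x_{kk}-x_{jk}\bigr)$, so $I_i>I_j+I_k$ and nonnegativity of the weights immediately give $x_{ii}>0$, after which your annulus argument kills $x_{jj},x_{kk},x_{jk}$ and the bookkeeping in your last paragraph finishes the proof. (When $I_i=I_j+I_k$ the stated formulas still hold with $x_{ii}=0$, so the degenerate case causes no trouble.) You should also state explicitly the hypothesis $I_i\geq I_j+I_k$ under which the formula is being proved, since the lemma as printed omits it and the formula is false for an index violating it.
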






Let $\mathcal{C}$ be the set of isotopy classes of simple closed curves in $\Sigma_{0,6}$.
For a given simple closed curve $\delta$ in a hexagon diagram, we define $p_i$, $q_i$ and $t_i$ in $E_i'$ as above. Then 
let $q_i'=p_it_i+q_i$ for $i=1,2,3$. 
\begin{Thm} [Special case of Dehn's Theorem ]\label{T52}

There is an one-to-one map $\phi:\mathcal{C}\rightarrow\mathbb{Z}^6$ so that $\phi(\delta)=(p_1,p_2,p_3,q_1',q_2',q_3')$. i.e., it classifies isotopy classes of simple closed curves.
 \end{Thm}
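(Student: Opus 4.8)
The plan is to establish the two things packaged into the phrase ``one-to-one map'': that the six coordinates are genuine isotopy invariants, and that they separate isotopy classes. Well-definedness is the easy half, since each $p_i=\min\{|\delta'\cap l_i|\,:\,\delta'\sim\delta\}$ is a geometric intersection number and hence manifestly invariant, while the quantities $m_i,n_i$ (and thus $q_i,t_i,q_i'$) are read off a no-bigon normal form whose relevant combinatorics are forced once $p_i$ is minimal. The substance is injectivity, which I would prove constructively: assuming $\phi(\delta)=\phi(\delta')$, I reconstruct an ambient isotopy of $\Sigma_{0,6}$ taking $\delta$ to $\delta'$ by showing that each piece of the standard decomposition -- the pair of pants $I$ and the three $2$-punctured disks $E_i'$ -- is pinned down up to isotopy rel windows by the coordinates, and that the gluings along the windows $\omega_i$ are pinned down as well. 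First I would place both curves in the standard position of Section~\ref{S2}: isotope so each component of $\delta\cap I$ is a standard arc $l_{ij}$, so that $\delta\cap\partial E_i\subset\omega_i$, so that $\partial E_i'\cup\delta\cup j_i\cup k_i\cup l_i$ has no bigon, and so that the arcs of $\delta\cap E_i''$ are pairwise parallel (see Figure~\ref{E2}). The no-bigon hypothesis is what guarantees these counts are realized geometrically rather than merely combinatorially.

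For the pair-of-pants piece I would use that each component of $\delta\cap E_i'$ meets $l_i$ exactly once, so the number of arcs entering $E_i'$ equals $p_i$ and the window count satisfies $I_i=2p_i$; thus the $I_i$ are determined by $p_1,p_2,p_3$. Feeding the $I_i$ into Lemma~\ref{T51} then determines all standard-arc weights $x_{ij}$. Since the isotopy classes rel boundary of essential arcs in a pair of pants are exactly the $l_{ij}$, a disjoint system of standard arcs with prescribed multiplicities is unique up to isotopy fixing $\partial I$. Hence $\delta\cap I$ and $\delta'\cap I$ coincide up to isotopy, and in particular the two curves leave the same cyclic pattern of endpoints on every window $\omega_i$.

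For each disk piece I would argue that the triple $(p_i,q_i,t_i)$, equivalently the pair $(p_i,q_i')$, determines $\delta\cap E_i'$ up to isotopy rel $\omega_i$: $p_i$ fixes the number of essential arcs, the residue $q_i$ fixes the matching of endpoints across the nested disk $E_i''$ (equivalently the counts $u_i^{+},v_i^{+},w_i^{+},m_i,n_i$ of Figure~\ref{E2}), and the twisting number $t_i$ records how many full twists the band of arcs makes about $\partial E_i$. The point of collapsing to $q_i'=p_it_i+q_i$ is that a short computation gives $q_i'=\pm m_i$, with the sign coming from the two cases in the definition of $t_i$; this removes the case distinction while retaining both the offset and the signed number of twists, so equal $(p_i,q_i')$ produce isotopic arc systems in $E_i'$.

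Finally I would assemble $\delta$ by gluing the normalized pair-of-pants arcs to the disk arcs along the windows. With $\delta\cap I$ and each $\delta\cap E_i'$ already matched up to isotopy, the only residual freedom is the cyclic matching of the $I_i$ endpoints on each $\omega_i$, and any two such matchings differ by a power of the Dehn twist about $\partial E_i$ -- precisely the power recorded by $t_i$, hence by $q_i'$. Agreement of all of $p_1,p_2,p_3,q_1',q_2',q_3'$ therefore forces the gluings to agree up to isotopy, yielding $\delta\sim\delta'$. I expect this assembly step to be the main obstacle: one must verify that the twisting convention is additive and sign-consistent across the two defining cases of $t_i$, handle the degenerate coordinates (when $p_i=0$ the curve avoids $E_i'$ and one must see that $m_i=n_i=0$ forces $q_i'=0$, so no spurious twist is recorded), and confirm that the no-bigon normalization realizes the geometric minimum so that the reconstructed curve is $\delta$ itself and not merely an isotopic competitor.
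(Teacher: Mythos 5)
The paper does not actually prove this statement: it is quoted as a special case of Dehn's classical theorem and the reader is referred to Penner--Harer \cite{4} for the general result, so there is no in-paper argument to compare against. Your outline is the standard reconstruction proof from that reference --- normalize to a bigon-free position, recover $\delta\cap I$ from $I_i=2p_i$ via Lemma~\ref{T51}, recover each $\delta\cap E_i'$ from $(p_i,q_i')$, and pin down the gluing along the windows by the twisting number --- and at the level of a sketch this is the right strategy. Your computation that $q_i'=\pm m_i$ (sign according to the two defining cases of $t_i$) is correct and is exactly why the six collapsed coordinates lose no information relative to the nine.

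Two points need repair. First, your treatment of the degenerate case is wrong as stated: when $p_1=p_2=p_3=0$ the essential curves that remain are precisely those parallel to one of the three $\partial E_i'$, and if you force $q_i'=0$ there, all three classes map to $(0,0,0,0,0,0)$ and $\phi$ fails to be injective. The paper's convention (set the $i$-th twisting coordinate to $1$ for a curve parallel to $\partial E_i'$ and to $0$ otherwise) exists precisely to separate these classes, and your proof must adopt it rather than argue that no twist should be recorded. Second, the well-definedness half is not as ``manifest'' as you claim: while $p_i$ is a genuine geometric intersection number, two different bigon-free representatives of the same isotopy class could a priori yield different values of $m_i$ and $n_i$, hence different $t_i$; ruling this out is the real content of the invariance statement and in \cite{4} requires showing that any two normalized configurations are related by an ambient isotopy preserving the decomposition, via innermost-bigon arguments. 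You correctly flag the window-gluing assembly as the main obstacle, but the invariance of the twisting number deserves the same level of caution, since it is where most informal accounts of Dehn's theorem leave a gap.
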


When $p_1=p_2=p_3=0$ then $t_i'=1$ if the simple closed curve is isotopic to $\partial E_i'$ and $t_j'=0$ if $j\neq i$.
Refer~\cite{4} to see the general Dehn's theorem.\\

We will use a sequence of nine parameters instead of six parameters for convenience in the rest of this paper.\\

\begin{Lem}\label{T1}
If $\gamma$ bounds an essential disk in $B^3-\epsilon$ and $|\gamma\cap \partial E|\neq \emptyset$, then $x_{ii}\geq 2$ for some $i\in\{1,2,3\}$.
\end{Lem}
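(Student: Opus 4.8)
My plan is to promote $\gamma$ to an embedded disk and to read the return arcs $l_{ii}$ off the way that disk meets $E$. First I would replace $\gamma$ by its minimal (bigon-free) representative $\delta$ used in the parameterization, so that $\delta$ has no bigon with $\partial E$; the hypothesis $\gamma\cap\partial E\neq\emptyset$ guarantees $\delta\cap\partial E\neq\emptyset$. Choose an essential disk $D$ in $B^3-\epsilon$ with $\partial D=\delta$. After an isotopy rel $\partial$ I would arrange $D\cap E$ to be a nonempty family of disjoint arcs: circle components are removed by the usual innermost-disk argument using irreducibility of the tangle exterior, and each surviving arc has both of its endpoints among the points of $\delta\cap\partial E$. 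Since $D$ is a disk, these arcs cut it into sub-disks whose dual graph is a tree, and a tree with at least one edge has at least two leaves; thus there are at least two outermost sub-disks $D_0$, each bounded by a single arc $a\subset E_i$ together with a sub-arc $b\subset\delta$ whose interior misses $E$.

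Next I would analyze such an outermost $D_0$, whose interior lies in a single complementary region of $E$ in $B^3$. Writing $W=\overline{B^3-(H_1\cup H_2\cup H_3)}$ for the ball with $\partial W = E_1\cup E_2\cup E_3\cup I$, either $D_0\subset W$ or $D_0\subset H_m$ for some $m$; in the latter case $a\subset E_m$ forces $i=m$. If $D_0\subset W$, then $b$ is an essential arc of the pair of pants $I$ with both endpoints on $\partial E_i$, hence is carried by $l_{ii}$ and contributes to $x_{ii}$. The decisive step is to exclude $D_0\subset H_m$. There $b\subset E_m'$ has both endpoints on $\partial E_m'$, and since $\delta$ has no bigon with $\partial E_m$ the arc $b$ is essential in the twice-punctured disk $E_m'$; but every essential such arc separates the two punctures, which are the endpoints of $\epsilon_m$. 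Now $a\cup b$ is a simple closed curve on the sphere $\partial H_m=E_m\cup E_m'$ bounding the disk $D_0$ inside the ball $H_m$, so $D_0$ splits $H_m$ into two balls with the two endpoints of $\epsilon_m$ lying in different ones; as $\epsilon_m$ joins those endpoints it must meet $D_0$, contradicting $D_0\cap\epsilon=\emptyset$. Hence no outermost disk lies in any $H_m$.

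Consequently every outermost disk lies in $W$, and the two distinct leaves produce two distinct sub-arcs of $\delta$, each carried by some $l_{ii}$. Because $\delta$ is embedded, $\delta\cap I$ cannot carry return arcs at two different windows at once: an $l_{ii}$-arc and an $l_{jj}$-arc with $i\neq j$ are forced to cross in the pair of pants, which is the combinatorial content behind Lemma~\ref{T51}. Therefore both return arcs sit at the same window $i$, and counting them gives $x_{ii}\geq 2$.

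I expect the main obstacle to be the exclusion of the $H_m$ case: one must argue carefully that the outermost arc $b$ is genuinely essential in the twice-punctured disk $E_m'$ (this is where the no-bigon normalization of $\delta$ is used) and then run the Jordan-curve and ball-separation argument inside $H_m$ to contradict $\epsilon_m\cap D_0=\emptyset$. A secondary point needing care is the passage from "two leaves of the tree" to "two arcs counted by the same $x_{ii}$," which rests on the pair-of-pants fact that at most one window can carry return arcs.
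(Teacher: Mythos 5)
Your proposal is correct and follows essentially the same route as the paper: cut the essential disk $D$ along $E$, observe that an outermost-subdisk (bigon) count forces at least two return arcs, rule out outermost disks inside any $H_m$ because such a disk would have to separate the endpoints of $\epsilon_m$ and hence meet it, and conclude with the fact that $l_{ii}$ and $l_{jj}$ cannot coexist for $i\neq j$. You have simply filled in the details (removing circle components of $D\cap E$, the tree/leaf count, and the essentiality of the outermost arc in $E_m'$) that the paper's terser proof leaves implicit.
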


\begin{proof}
Let $D$ be an essential disk in $B^3-\epsilon$ so that $
\partial D =\gamma$ . Then we know that $E_i$ will
separates $D$ into some sub-disks $D_i$ since $|\gamma\cap\partial E|\neq \emptyset$. We notice that at least two of $D_i$
need to be bounded by a bigon. One side of bigons needs to be a component of 
 $\gamma\cap I$ and
another side of bigons is a properly embedded arc in $E_i$ for some $i$ since $H_i$ cannot have a
bigon. If $H_i$ has a bigon then the disk $\Delta$ bounded by the bigon should meet $\epsilon_i$. This makes
a contradiction. So, we need $x_{11} + x_{22} + x_{33} \geq 2$. However, $l_{ii}$ and $l_{jj}$ cannot coexist for
$i \neq j$. This implies that $x_{ii} \geq 2$ for some $i \in \{1,2,3\}.$
\end{proof}

Consider the three half Dehn twists $\sigma_1,\sigma_2, \sigma_3$ and $\sigma_4$ supported on the $2$-punctured disks $A,B,E_1'$ and $E_3'$ counter clockwise as in Figure~\ref{p6}. Then let $\hat{\sigma_i}$ be an isotopy extension to $B^3$ of $\sigma_i$ for $i=1,2,3$. Then we note that $\hat{\sigma_1},\hat{\sigma_2},\hat{\sigma_3},\hat{\sigma_4}$ generates all the rational $3$-tangles. (Refer to~\cite{3}.)\\

Let $\mathcal{T}_{\{\hat{\sigma_1}\,\hat{\sigma_2},\hat{\sigma_3}\}}$ be the set of rational 3-tangles which are generated by $\hat{\sigma_1}\,\hat{\sigma_2},\hat{\sigma_3}$. 
Actually, the main goal of this paper is the classification of $\mathcal{T}_{\{\hat{\sigma_1}\,\hat{\sigma_2},\hat{\sigma_3}\}}$.

\begin{figure}[htb]
\includegraphics[scale=.6]{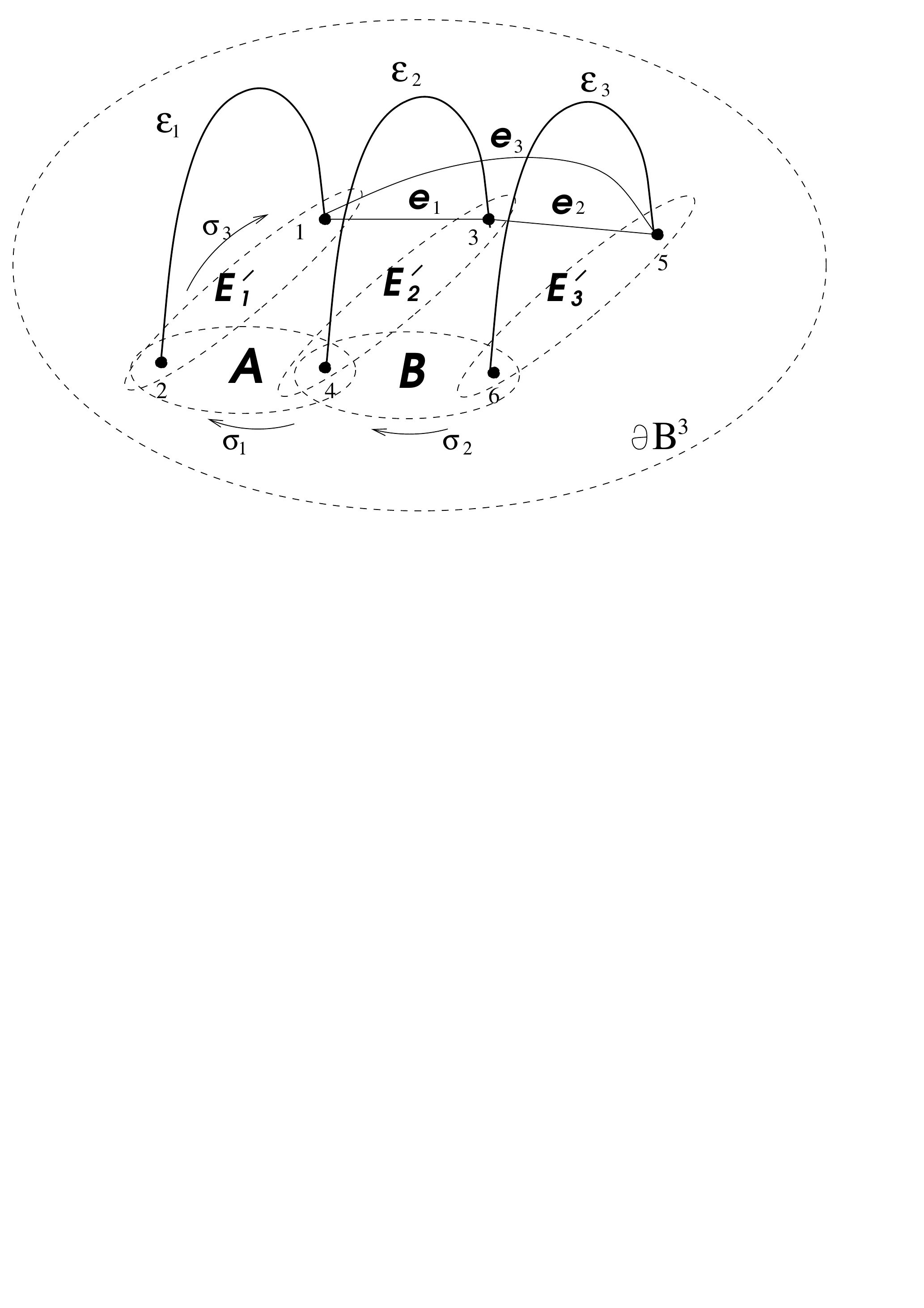}
\vskip -300pt
\caption{}
\label{p6}
\end{figure}

Let $f$ and $g$ be orientation preserving homeomorphisms from $\Sigma_{0,6}$ to $\Sigma_{0,6}$. Then we have isotopy extensions $F$ and $G$  of $f$ and $g$ respectively to $B^3$. Let $\mathbb{T}_F=(F(\epsilon),B^3)$ and $\mathbb{T}_G=(G(\epsilon),B^3)$. Then we have the following theorem.

\begin{Thm}[\cite{3}]\label{T2}
For two rational 3-tangles $\mathbb{T}_F$ and $\mathbb{T}_G$,
$\mathbb{T}_F\approx \mathbb{T}_G$ if and only if $G^{-1}F(\partial E)$ bounds essential disks in $B^3-\epsilon$.
\end{Thm}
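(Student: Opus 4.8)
The plan is to convert the relation $\approx$ into a boundary–extension problem for the pair $(B^3,\epsilon)$ and then settle that problem by cutting along the disk system $E$ and applying the Alexander trick to the complementary balls. First I would record a conjugation reduction. By the definition of $\approx$, an orientation-preserving homeomorphism $h\colon(B^3,F(\epsilon))\to(B^3,G(\epsilon))$ that is the identity on $\partial B^3$ exists if and only if $\Theta:=G^{-1}hF$ is an orientation-preserving self-homeomorphism of the pair $(B^3,\epsilon)$: indeed $h(F(\epsilon))=G(\epsilon)$ is equivalent to $\Theta(\epsilon)=\epsilon$, on the boundary $\Theta|_{\partial B^3}=g^{-1}f$, and conversely $h=G\Theta F^{-1}$ recovers $h$ from such a $\Theta$. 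Hence $\mathbb{T}_F\approx\mathbb{T}_G$ if and only if the surface homeomorphism $\psi:=g^{-1}f$ of $\Sigma_{0,6}$ extends to an orientation-preserving homeomorphism of $(B^3,\epsilon)$. Since $\partial E\subset\Sigma_{0,6}=\partial B^3$ and $G^{-1}F$ restricts to $\psi$ there, we have $\psi(\partial E)=G^{-1}F(\partial E)$, so the right-hand condition of the theorem is a condition on $\psi$ alone. Note also that the relation $\approx$ presupposes that $F(\epsilon)$ and $G(\epsilon)$ share the same six endpoints, which forces $f$ and $g$ to induce the same permutation of the punctures; thus $\psi$ fixes each puncture and $\psi(E_i')$ is again a $2$-punctured disk enclosing the endpoints of $\epsilon_i$.

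The forward implication is then immediate. If $\Theta$ extends $\psi$, then $\Theta$ preserves $\epsilon$ and carries essential disks of $B^3-\epsilon$ to essential disks, so $\Theta(E_1),\Theta(E_2),\Theta(E_3)$ are essential disks in $B^3-\epsilon$ whose boundaries are $\Theta(\partial E_i)=\psi(\partial E_i)=G^{-1}F(\partial E_i)$. Thus $G^{-1}F(\partial E)$ bounds essential disks in $B^3-\epsilon$.

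For the converse I would reconstruct $\Theta$ by hand. Recall that the curves $\partial E_i$ cut $\Sigma_{0,6}$ into the pair of pants $I$ and the three $2$-punctured disks $E_i'$, and that $E_i\cup E_i'$ bounds the ball $H_i$ meeting $\epsilon$ in the single trivial arc $\epsilon_i$; consequently the disks $E_1,E_2,E_3$ cut $(B^3,\epsilon)$ into the central arc-free ball $P_0=B^3-(H_1\cup H_2\cup H_3)$ and the three trivial-arc balls $H_i$. Now assume $\psi(\partial E)=G^{-1}F(\partial E)$ bounds disjoint essential disks $D_1,D_2,D_3$ in $B^3-\epsilon$ with $\partial D_i=\psi(\partial E_i)$. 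Each $c_i:=\psi(\partial E_i)$ bounds the $2$-punctured disk $\psi(E_i')$; together with $D_i$ it cobounds a ball $H_i''$ meeting $\epsilon$ in a single trivial arc, and the $D_i$ cut $(B^3,\epsilon)$ into a central arc-free ball $P_0''$ and the three balls $H_i''$. I would then put $\Theta=\psi$ on $\Sigma_{0,6}$, extend each $\psi|_{\partial E_i}\colon\partial E_i\to\partial D_i$ to a homeomorphism $E_i\to D_i$ (a boundary homeomorphism of a disk always extends over the disk), and finally extend over the complementary balls: over the arc-free ball $P_0\to P_0''$ by the Alexander trick, and over each trivial-arc ball $H_i\to H_i''$ using that a trivial arc in a ball is unique up to isotopy rel boundary, so the induced homeomorphism of $(\partial H_i,\epsilon_i\cap\partial H_i)$ extends over the pair. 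Reassembling yields the desired orientation-preserving $\Theta$ with $\Theta|_{\partial B^3}=\psi$, which closes the converse.

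The main obstacle is precisely the geometric claim used in the converse: that the essential disks $D_i$, whose boundaries are only assumed to be the curves $\psi(\partial E_i)$, genuinely cut $(B^3,\epsilon)$ into a central ball and three trivial-arc balls matching the $E$-decomposition. One must show that each $D_i\cup\psi(E_i')$ bounds a ball meeting $\epsilon$ in exactly one trivial arc and that the central complementary piece is a ball; this is where the hypotheses that $\epsilon$ is the trivial tangle and that $\psi(\partial E)$ is a complete, disk-bounding curve system are essential, since for a nontrivial tangle some $c_i$ would fail to bound an essential disk. Ruling out extra intersections and nonstandard complementary pieces requires innermost-disk and irreducibility arguments in the spirit of the proof of Lemma~\ref{T1}. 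Once this matching is established the extension proceeds routinely, so verifying it carefully is the heart of the argument in~\cite{3}.
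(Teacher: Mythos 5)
The paper does not actually prove this statement; it is quoted from~\cite{3}, so there is no in-text argument to compare yours against. That said, your reduction of $\approx$ to the extension problem for $\psi=g^{-1}f$, and your cut-along-$E$/Alexander-trick reconstruction of the extension, is the natural and correct route, and your forward direction is complete as written. Two points need attention. First, your parenthetical claim that $\approx$ forces $f$ and $g$ to induce the same permutation of the punctures is unjustified and also unnecessary: a self-homeomorphism of $(B^3,\epsilon)$ may permute the arcs, so $\psi$ need only carry the endpoint pair of $\epsilon_i$ to the endpoint pair of some $\epsilon_{j(i)}$, and your extension over $H_i\to H_i''$ works verbatim with that permutation. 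Second, the step you flag as the ``main obstacle'' genuinely must be closed, but it closes by standard arguments which you should supply rather than defer: (i) the curves $\psi(\partial E_i)$ are pairwise disjoint, and since $B^3-N(\epsilon)$ is an irreducible genus-$3$ handlebody the essential disks they bound can be isotoped to be pairwise disjoint; (ii) an arc of $\epsilon$ with exactly one endpoint in the $2$-punctured disk $\psi(E_i')$ would have to cross the sphere $D_i\cup\psi(E_i')$ in the interior of $B^3$, hence cross $D_i$, which is impossible; therefore each ball $H_i''$ cut off by that sphere contains exactly one whole arc of $\epsilon$ and the central complementary piece is an arc-free ball by Schoenflies; (iii) the arc in $H_i''$ is boundary-parallel because cutting the handlebody $B^3-N(\epsilon)$ along the essential disk $D_i$ yields handlebodies, so $H_i''-N(\epsilon_{j(i)})$ has free cyclic fundamental group and is a solid torus. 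With (i)--(iii) inserted, your argument is a complete and correct proof of the cited theorem.
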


Actually, if $G^{-1}F(\partial E_i)$ and $G^{-1}F(\partial E_i)$  bound essential disks in $B^3-\epsilon$ for $i\neq j$ then $G^{-1}F(\partial E_k)$ also bounds an essential disk in $B^3-\epsilon$ for $k\neq i,j$. So, it is enough to check whether two of them bound essential disks or not. The following lemma supports this argument.

\begin{Lem}[\cite{3}]\label{T3}
Suppose that two essential simple closed curves $\alpha,
\beta$ ($\nsim \alpha $) bound disjoint disks
in $B^3-\epsilon$. If 
$\gamma$ is an essential simple closed curve which encloses two punctures, disjoint
with $\alpha$ and $\beta$ and non-parallel to $\alpha$ and $\beta$, then $\gamma$
 bounds an essential disk in $B^3-\epsilon$.
\end{Lem}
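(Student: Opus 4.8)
The plan is to produce the disk for $\gamma$ by cutting the trivial-tangle complement along the two given disks and locating $\gamma$ inside an elementary piece. First I would fix disjoint disks $D_\alpha,D_\beta\subset B^3-\epsilon$ with $\partial D_\alpha=\alpha$ and $\partial D_\beta=\beta$, pushing their interiors off the boundary so that $\gamma$, which is disjoint from $\alpha$ and $\beta$ on $\Sigma_{0,6}$, is also disjoint from $D_\alpha\cup D_\beta$. Because $D_\alpha$ and $D_\beta$ miss $\epsilon$, each is a properly embedded disk in the ball $B^3$ meeting $\partial B^3$ only along its boundary; hence each separates $B^3$ into two balls and no arc $\epsilon_i$ can cross it. So every arc lies entirely on one side of each disk, and $\alpha,\beta$ each enclose a union of endpoint-pairs $\{a_i,b_i\}$.

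Next I would analyze the combinatorics of the two disjoint disks. Cutting $B^3$ along $D_\alpha\cup D_\beta$ yields three balls, and the induced partition of the three arcs must be laminar (the enclosed endpoint-sets are nested or disjoint) since the disks are disjoint. Using that $\alpha,\beta$ are essential and $\alpha\nsim\beta$, no piece can be a ball disjoint from $\epsilon$ and no two pieces can be exchanged by an isotopy, so each of the three pieces contains exactly one arc; removing the arcs, $B^3-\epsilon$ is cut into three solid tori $W_1,W_2,W_3$ with $\epsilon_k\subset W_k$ and $\pi_1(W_k)\cong\mathbb Z$ generated by the meridian $\mu_k$ of $\epsilon_k$. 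Since $\gamma$ is disjoint from $D_\alpha\cup D_\beta$ it lies on the boundary of a single piece, say $W_k$, and the only punctures appearing on the sphere part of $\partial W_k$ are the two endpoints $a_k,b_k$ of $\epsilon_k$, the other four being hidden behind the cutting disks.

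Now I would pin down what $\gamma$ encloses and finish. If the two-puncture side of $\gamma$ met a boundary copy of a cutting disk it would contain all of the (at least two) punctures behind that disk and nothing else, forcing $\gamma$ to be parallel to $\alpha$ or $\beta$, which is excluded; hence $\gamma$ encloses exactly $\{a_k,b_k\}$. Then in $H_1(W_k)\cong\mathbb Z$ the meridional generators at $a_k$ and $b_k$ are $\mu_k$ and $-\mu_k$, so $[\gamma]=[x_{a_k}]+[x_{b_k}]=0$; as $\pi_1(W_k)$ is abelian, $\gamma$ is nullhomotopic in $W_k$, and by Dehn's lemma it bounds an embedded disk $D_\gamma\subset W_k\subset B^3-\epsilon$. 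Since $\gamma$ is essential on $\Sigma_{0,6}$, this disk is essential, which is the claim.

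The step I expect to be the main obstacle is the middle one: rigorously extracting the ``three solid tori, one arc each'' decomposition from only the disjointness, essentiality, and non-isotopy of $D_\alpha,D_\beta$, that is, ruling out a piece containing zero or two arcs and treating the nested configuration on equal footing with the unnested one. Everything else is either the clean-separation observation or the abelian $\pi_1$ computation, both routine once the decomposition is in hand; Lemma~\ref{T1} can be invoked to keep $\gamma$ in minimal position so that the intersection-counting behind these claims is valid.
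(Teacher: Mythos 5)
The paper never proves this lemma: it is imported from \cite{3} and stated without argument, so there is no in-text proof to compare yours against; I can only assess the proposal on its own terms. On those terms the strategy is sound and the conclusion follows: cut along the two disjoint disks, locate $\gamma$ in one piece, identify the two punctures it encloses with the endpoints of a single arc $\epsilon_k$, and finish with $H_1$ and Dehn's lemma. Two steps deserve repair. First, ``no two pieces can be exchanged by an isotopy'' is not what forces one arc per piece. The correct mechanism is parity plus essentiality: since the arcs miss $D_\alpha\cup D_\beta$, both endpoints of each arc lie in the same region of $\Sigma_{0,6}-(\alpha\cup\beta)$, so every region contains an even number of punctures; a region with zero punctures would make $\alpha$ or $\beta$ inessential or make them parallel; and $2+2+2=6$ then forces exactly two punctures, the endpoints of one arc, in each of the three regions. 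This count also shows $\gamma$ cannot sit inside an innermost two-punctured disk region (there it would be inessential or parallel to $\alpha$ or $\beta$), which is the clean way to justify your claim that $\gamma$ encloses exactly $\{a_k,b_k\}$.

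Second, and more substantively, you assert that each piece minus its arc is a solid torus. A ball minus a properly embedded arc always has $H_1\cong\mathbb{Z}$, but $\pi_1$ is abelian only if the arc is unknotted in that sub-ball, and your homotopy step needs exactly this. It is true here, but it is the one place the triviality of $\epsilon$ enters and it should be argued: $B^3-N(\epsilon)$ is a genus-three handlebody, cutting a handlebody along properly embedded disks yields handlebodies, and an Euler characteristic count of the boundary (or the observation that a free group with abelianization $\mathbb{Z}$ is $\mathbb{Z}$) shows each piece is a solid torus. With that inserted, the homology computation $[\gamma]=\mu_k-\mu_k=0$ and the appeal to Dehn's lemma complete a correct proof. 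The closing remark about using Lemma~\ref{T1} to put $\gamma$ in minimal position is not needed; that lemma addresses a different point and plays no role in this argument.
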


Let $\gamma_1$ and $\gamma_2$ be disjoint two simple closed curves in $\Sigma_{0,6}$. Then we consider a rectangle $R$ (or band) in $\Sigma_{0,6}$ so that the interior of $R$ is disjoint with $\gamma_1\cup\gamma_2$ and $R_i=\gamma_i\cap R$ is a side of $R$ for $i=1,2$. Let $R_3$ and $R_4$ be the other two sides of $R$.\\

Now, we define the $\emph{band sum}$  of $\gamma_1$ and $\gamma_2$ by $R$, denoted by $\gamma_1+_R\gamma_2$, which is $(\gamma_1\cup\gamma_2)\cup (R_3\cup R_4)-(R_1^\circ\cup R_2^\circ)$. Then we have the following lemma.

\begin{Lem}\label{T4}
Let $\gamma_1$ and $\gamma_2$ be two disjoint, non-parallel simple closed curves in $\Sigma_{0,6}$ which enclose only two punctures of $\{1,2,3,4,5,6\}$. Then $\gamma_1+_R\gamma_2$ is  disjoint with $\gamma_1$ and $\gamma_2$ up to isotopy and it is a non-parallel simple closed curve to $\gamma_1$ and $\gamma_2$ which encloses only two punctures of $\{1,2,3,4,5,6\}$.
\end{Lem}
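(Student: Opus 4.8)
The plan is to analyze the band sum through the decomposition of the sphere that the two given curves induce. Viewing $\Sigma_{0,6}$ as a sphere with six punctures, every simple closed curve is separating, so $\gamma_1$ splits it into two disks; since $\gamma_2$ is disjoint from $\gamma_1$ it lies in one of these disks, and cutting that disk further by $\gamma_2$ shows that $\Sigma_{0,6}\setminus(\gamma_1\cup\gamma_2)$ has exactly three components: a disk $P_1$ meeting only $\gamma_1$, a disk $P_2$ meeting only $\gamma_2$, and an annulus $A$ cobounded by $\gamma_1$ and $\gamma_2$. Write $a,m,b$ for the numbers of punctures in $P_1,A,P_2$, so $a+m+b=6$. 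First I would observe that the band $R$ must lie in $A$: its interior avoids $\gamma_1\cup\gamma_2$, so it sits in a single region, and only $\overline{A}$ meets both $\gamma_1$ and $\gamma_2$, which $R$ does through the sides $R_1\subset\gamma_1$ and $R_2\subset\gamma_2$.

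Next I would pin down the puncture distribution. The hypothesis that $\gamma_1$ encloses exactly two punctures says one of its two complementary disks has two punctures, i.e. $a=2$ or $m+b=2$; likewise for $\gamma_2$, either $b=2$ or $m+a=2$. Non-parallelism of $\gamma_1$ and $\gamma_2$ means the annulus $A$ between them is punctured, i.e. $m\geq 1$, since two disjoint simple closed curves are isotopic exactly when they cobound an unpunctured annulus. Testing the four combinations against $a+m+b=6$ and $m\geq 1$, the three mixed cases each force $m=0$ or $m<0$ and are therefore impossible, leaving only $a=b=2$ and hence $m=2$. I expect this rigidity step, namely that the six punctures must split as $2+2+2$, to be the main point and the chief obstacle of the argument.

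Finally I would identify the band sum itself. Since $R\cap A$ is a band joining the two boundary circles of the annulus $A$, removing its interior cuts $A$ into a single disk $D_3$ whose boundary is precisely $(\gamma_1\setminus R_1^\circ)\cup R_3\cup(\gamma_2\setminus R_2^\circ)\cup R_4=\gamma_1+_R\gamma_2$. Thus $\gamma_1+_R\gamma_2$ is a single simple closed curve bounding the disk $D_3$, and $D_3$ contains exactly the $m=2$ punctures of $A$, so the band sum encloses precisely two punctures. Pushing $\partial D_3$ slightly into the interior of $A$, off the arcs it shares with $\gamma_1$ and $\gamma_2$, makes it disjoint from both while still bounding a two-punctured disk, which gives disjointness up to isotopy. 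Non-parallelism then follows because an essential curve bounding a two-punctured disk is determined up to isotopy by the pair of punctures it encloses, and the pair enclosed by $\gamma_1+_R\gamma_2$ (the punctures in $A$) is disjoint from the pairs enclosed by $\gamma_1$ (in $P_1$) and by $\gamma_2$ (in $P_2$). The only care needed is to check that the isotopy pushing $\partial D_3$ off $\gamma_1\cup\gamma_2$ crosses no puncture, which is immediate since the two punctures of $D_3$ sit in its interior.
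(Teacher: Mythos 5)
Your proof is correct and follows essentially the same route as the paper: both arguments identify the band sum as the boundary of the twice-punctured region lying between $\gamma_1$ and $\gamma_2$ (equivalently, as a curve enclosing the complementary pair of punctures) and obtain disjointness by pushing it off into that region; your explicit derivation of the forced $2+2+2$ puncture distribution is just a fleshed-out version of the paper's one-line observation that the two enclosed pairs $\{a,b\}$ and $\{c,d\}$ are disjoint. One caveat: the justification in your last step, that an essential curve bounding a twice-punctured disk is \emph{determined} up to isotopy by the pair of punctures it encloses, is false in $\Sigma_{0,6}$ (for a fixed pair there are infinitely many non-isotopic such curves, e.g.\ Dehn-twist images of $\partial E_1'$); fortunately the implication you actually use is only that isotopic curves enclose the same pair of punctures, which is true, so the conclusion is unaffected.
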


\begin{proof}
Take $\gamma_i'$ which is parallel to $\gamma_i$ and the $2$-punctured disk bounded by $\gamma_i'$ contains $\gamma_i$. We also need to assume that $\gamma_1'$ and $\gamma_2'$ are disjoint. Then we can take $R'$ so that $\gamma_1'+_{R'}\gamma_2'$ is isotopic to $\gamma_1+_R\gamma_2$ and $\gamma_1'+_{R'}\gamma_2'$ is disjoint with both $\gamma_1$ and $\gamma_2$.\\

Let $\{a,b\}$ and $\{c,d\}$ be the two punctures which are enclosed in $\gamma_1$ and $\gamma_2$ respectively.
Since $\gamma_1$ and $\gamma_2$ are two disjoint, non-parallel simple closed curves in $\Sigma_{0,6}$, they have $\{a,b\}\cap\{c,d\}=\emptyset$. Then we note that $\gamma_1+_R\gamma_2$ enclose the two punctures $\{1,2,3,4,5,6\}-\{a,b,c,d\}$. This implies that $\gamma_1+_R\gamma_2$ are not parallel to both $\gamma_1$ and $\gamma_2$.
\end{proof}

Now, we consider the ``standard parameterization" of $\mathcal{C}$ for easier argument in the next section.
\section{Standard parameterization of $\mathcal{C}$ and the proof of  main theorem}

First, let $\gamma$ be a simple closed curve which bounds an essential disk in $B^3-\epsilon$. Then by Lemma~\ref{T1}, $x_{ii}\geq 2$ for some $i\in\{1,2,3\}$. Without loss of generality, assume that $x_{11}\geq 2$. So, $x_{22}=x_{33}=0$.\\

Now, we modify $\gamma$ into $\gamma_0$ so that $\gamma_0$ bounds an essential disk in $B^3-\epsilon$ if and only if $\gamma$ does and every components of $\gamma_0\cap P'$ are isotopic to one of arcs in the given diagrams as in Figure~\ref{p7} which are called $\emph{standard diagrams}$. Also, we say that $\gamma_0$ is in $\emph{standard position}$ if $\gamma_0$ has a standard diagram.

 \begin{figure}[htb]
 \includegraphics[scale=.5]{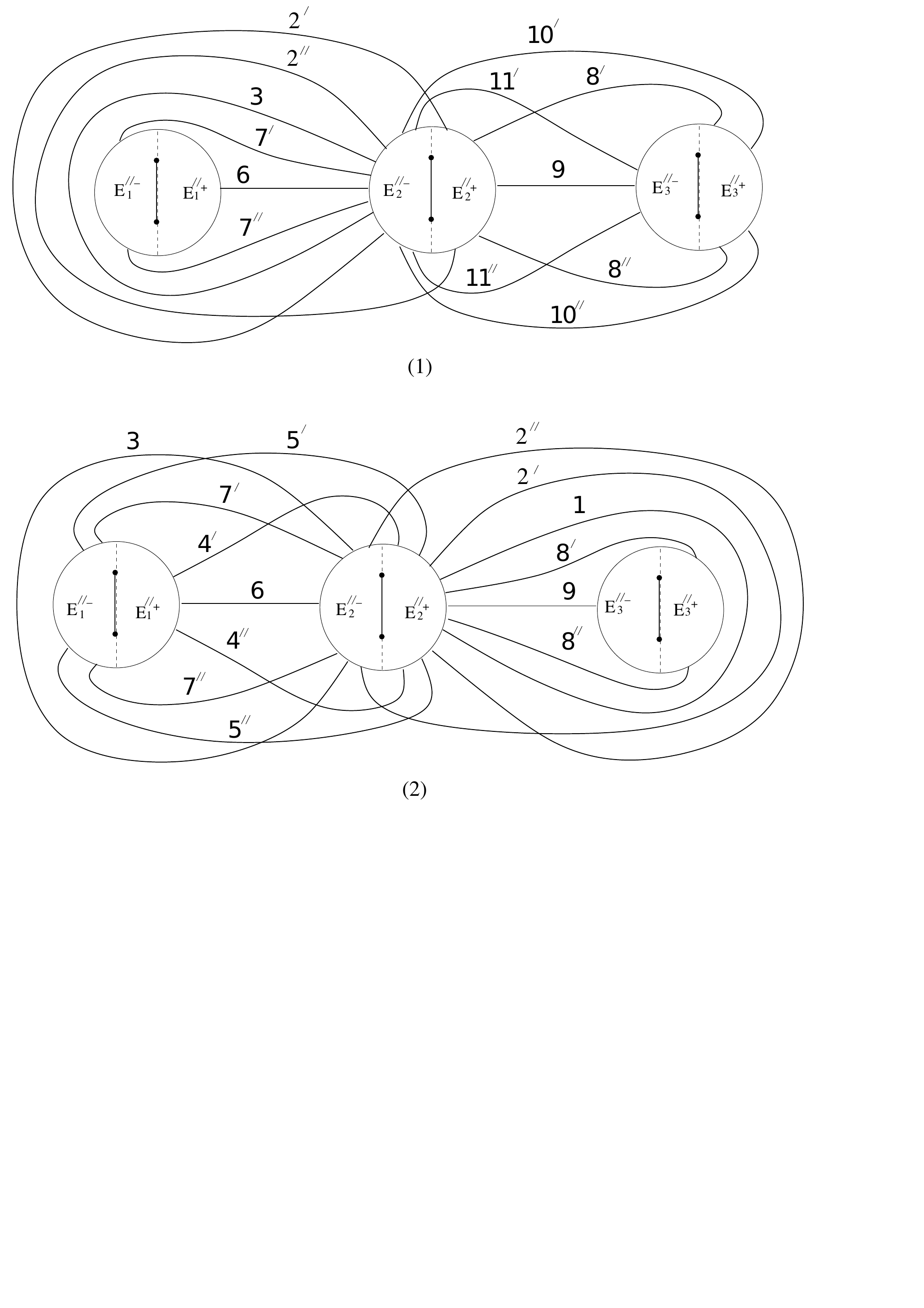}
 \vskip -170pt
 \caption{Standard diagrams}
 \label{p7}
 \end{figure}

The following lemma shows us how to modify $\gamma$ into $\gamma_0$ so that $\gamma_0$ has a standard diagram. 

\begin{Lem}[\cite{3}] Suppose that $\gamma'$ is a simple closed curve which is parameterized by $(p_1,q_1,0,p_2,q_2,$ $0,p_3,q_3,0)$.
If $x_{11}>0$,
then we can construct a  simple closed curve $\gamma_0$ which is parameterized by $(p_1,q_1,t_1,p_2,q_2,t_2,p_3,q_3,t_3)$ for  $t_i\in \mathbb{Z}$ as in the table below, and it bounds an essential disk in $B^3-\epsilon$ if  $\gamma'$ does. Moreover, each component of $\gamma_0\cap I'$ is carried by one of the given arc types in one of the standard diagrams.

\begin{enumerate}
\item $q_1+p_1< x_{11}+x_{13}:$ $(t_1,t_2,t_3)=(0,-1,0)$ if $p_2\neq 0$, $(t_1,t_2,t_3)=(0,0,0)$ if $p_2=0$.\\

\item $ q_1+p_1 \geq x_{11}+x_{13}:$ $(t_1,t_2,t_3)=(-1,-1,0)$ if $p_2\neq 0$, $(t_1,t_2,t_3)=(-1,0,0)$ if $p_2=0$.

\end{enumerate}

Moreover, if we have the following condition then $\gamma'$ does not bound an essential disk in $B^3-\epsilon$.
\begin{enumerate}
\item [(3)] $x_{11}+x_{13}\leq q_1+p_1<x_{11}+x_{12}+x_{13}$ and $x_{13}\geq q_1$.

\end{enumerate}
\end{Lem}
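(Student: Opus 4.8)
The plan is to split the statement into three assertions and prove them in order: (i) the twist exponents $t_i$ can be prescribed freely without affecting whether the curve bounds an essential disk in $B^3-\epsilon$; (ii) the specific values in the table put $\gamma_0$ into standard position, i.e. carry every component of $\gamma_0\cap I'$ onto one of the arc types of Figure~\ref{p7}; and (iii) the numerical condition in item (3) obstructs bounding an essential disk. The conceptual core is (i), while (ii) and (iii) are verifications relative to the standard templates.

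For (i), I would first note that incrementing $t_i$ by one unit is realized on $\Sigma_{0,6}$ by the Dehn twist $\tau_i$ along $\partial E_i'=\partial E_i$; by the very definition of the parameter $t_i$ this is exactly the effect of $\tau_i$ on $\gamma$. Since $\partial E_i$ bounds the essential disk $E_i$ in $B^3-\epsilon$, the twist $\tau_i$ extends to a self-homeomorphism $\hat\tau_i$ of the pair $(B^3,\epsilon)$, supported in a collar $E_i\times[-1,1]$ disjoint from $\epsilon$. A homeomorphism of the pair carries essential disks of $B^3-\epsilon$ to essential disks, so $\gamma$ bounds an essential disk if and only if $\hat\tau_i(\gamma)$ does. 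As $\gamma_0$ is obtained from $\gamma'$ by applying $\hat\tau_1^{\,t_1}\hat\tau_2^{\,t_2}\hat\tau_3^{\,t_3}$ on the boundary, $\gamma_0$ bounds an essential disk if and only if $\gamma'$ does, for the table values and indeed for any integer choice of the $t_i$. This proves the ``if'' clause uniformly, independently of which row of the table applies.

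For (ii), I would analyze the arcs of $\gamma'\cap E_1'$ recorded by $(u_1^+,v_1^+,w_1^+,m_1,n_1)$ as in Figure~\ref{E2}, together with the weights $x_{1j}$ supplied by Lemma~\ref{T51}. The comparison of $q_1+p_1$ with $x_{11}+x_{13}$ measures whether the outermost family of arcs leaving the window $\omega_1$ returns to $\omega_1$ (contributing to $x_{11}$ and $x_{13}$) before or after it would be forced to shear across $\omega_2$; this dichotomy is precisely what selects $t_1=0$ versus $t_1=-1$. The choice $t_2=-1$ when $p_2\neq 0$ (and $t_2=0$ when $p_2=0$, so that no arcs enter $E_2'$) removes the residual twist created along $\omega_2$ when the arcs are combed toward the templates, while $t_3=0$ keeps $E_3'$ as the fixed reference disk. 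With these exponents pinned down I would exhibit an explicit isotopy carrying each component of $\gamma_0\cap I'$ onto one of the standard arc types of Figure~\ref{p7}, checking that the resulting weights agree with those dictated by $(p_i,q_i)$. This step is essentially bookkeeping, but it must be carried out case by case and is where most of the work lies.

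For (iii), I would argue by contradiction using Lemma~\ref{T1}. Suppose $\gamma'$ (equivalently $\gamma_0$) bounds an essential disk $D$ in $B^3-\epsilon$ under the hypotheses $x_{11}+x_{13}\le q_1+p_1< x_{11}+x_{12}+x_{13}$ and $x_{13}\ge q_1$. Cutting $D$ along $E_1$ and taking an innermost component of $D\cap E_1$, the two inequalities force the outermost arcs through $\omega_1$ to join punctures in a pattern producing an innermost bigon whose $E_1$-side is an arc separating $\epsilon_1$; as in the proof of Lemma~\ref{T1}, the disk bounded by this bigon would then have to meet $\epsilon_1$, a contradiction. Equivalently, this position forces $x_{22}$ or $x_{33}$ to be positive alongside $x_{11}>0$, which is impossible since $l_{ii}$ and $l_{jj}$ cannot coexist. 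The main obstacle across the whole argument is the precise matching of arc types in (ii) together with identifying exactly which bigon is forced in (iii); both demand a careful reading of the arc combinatorics inside $E_1'$ rather than any new conceptual ingredient.
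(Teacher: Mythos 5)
This lemma is not proved in the paper at all; it is imported verbatim from the author's earlier preprint \cite{3}, so there is no in-paper argument to compare yours against. Judged on its own terms, your step (i) is correct and is the right mechanism: increasing $t_i$ by one is realized by the Dehn twist along $\partial E_i'$, and since $\partial E_i'=\partial E_i$ bounds the essential disk $E_i$ in $B^3-\epsilon$, that twist extends to a self-homeomorphism of the pair $(B^3,\epsilon)$ supported in a collar of $E_i$ disjoint from $\epsilon$; hence $\gamma_0$ bounds an essential disk if and only if $\gamma'$ does, for \emph{any} integers $t_i$. This is exactly the device the paper itself uses in Lemmas~\ref{T33} and~\ref{T34}, and it fully disposes of the ``bounds if $\gamma'$ does'' clause.

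The genuine gaps are in your steps (ii) and (iii). For (ii) you explain what the dichotomy $q_1+p_1\lessgtr x_{11}+x_{13}$ is supposed to measure and assert that the tabulated $(t_1,t_2,t_3)$ comb every component of $\gamma_0\cap I'$ onto an arc type of Figure~\ref{p7}, but you never carry out the verification. Since by your own step (i) \emph{every} choice of $t_i$ preserves the disk-bounding property, the entire content of the table is the standard-position claim, and deferring it as ``bookkeeping'' leaves the lemma unproved: nothing in your argument would detect an error if, say, row (1) read $(0,+1,0)$ instead of $(0,-1,0)$. For (iii) the proposed mechanism does not work as stated. You suggest that condition (3) forces $x_{22}$ or $x_{33}$ to be positive alongside $x_{11}>0$; but $x_{22}=x_{33}=0$ is automatic for any simple closed curve with $x_{11}>0$ (the arcs $l_{ii}$ and $l_{jj}$ cannot coexist), so no contradiction of that shape can arise, and Lemma~\ref{T1} only supplies the necessary condition $x_{ii}\geq 2$, which is perfectly consistent with the inequalities in (3). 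What is actually needed is a computation inside $E_1'$ showing that, under $x_{11}+x_{13}\leq q_1+p_1<x_{11}+x_{12}+x_{13}$ and $x_{13}\geq q_1$, the way the $l_{11}$-arcs close up through the window $\omega_1$ traps a puncture or produces a curve enclosing an odd set of punctures, so that no essential disk can exist; that computation is absent, and the bigon argument you gesture at does not single out which configuration is obstructed.
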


 Then $\gamma_0$ should satisfy the following condition to bound an essential disk in $B^3-\epsilon$.
 
 \begin{Lem}[\cite{3}]\label{T32}
Suppose that $\gamma_0$ is a simple closed curve which bounds an essential disk $A$ in $B^3-\epsilon$ and it is in standard position with $x_{11}>0$.
 Then  $m_1+m_3>0$.
\end{Lem}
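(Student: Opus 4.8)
The plan is to argue by contradiction: assume $m_1=m_3=0$ and derive that $\gamma_0$ cannot bound an essential disk in $B^3-\epsilon$ (given $x_{11}>0$). Before touching the disk itself, I would first pin down which arc types can occur in the pair of pants $I$. Since $\gamma_0$ bounds an essential disk and meets $\partial E$, Lemma~\ref{T1} gives $x_{11}\geq 2$, and because $l_{ii}$ and $l_{jj}$ cannot coexist we get $x_{22}=x_{33}=0$. Substituting $I_2=x_{12}+x_{23}$ and $I_3=x_{13}+x_{23}$ into the identities $x_{12}=I_2$, $x_{13}=I_3$ of Lemma~\ref{T51} then forces $x_{23}=0$. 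Hence the only components of $\gamma_0\cap I$ are of types $l_{11}$, $l_{12}$, $l_{13}$, with at least two of type $l_{11}$. Next I would translate the hypothesis into the $2$-punctured disks: since $m_1=u_1^++w_1^+$, the assumption $m_1=0$ gives $u_1^+=w_1^+=0$, so every component of $\gamma_0\cap{E_1'}^+$ runs from $l_1$ to $k_1$ and $p_1=v_1^+=n_1$; symmetrically in $E_3'$. In particular no component of $\gamma_0\cap E_1'$ or $\gamma_0\cap E_3'$ meets the arc $j_1$ (resp. $j_3$) of Figure~\ref{E2}, so these components all lie on the ``$k$-side'' of their disks.

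The heart of the argument is an innermost/outermost disk analysis of an essential disk $A$ with $\partial A=\gamma_0$. I would put $A$ in general position with $E=E_1\cup E_2\cup E_3$ and remove the circles of $A\cap E$ by surgery along innermost disks (permissible because $B^3-\epsilon$ is irreducible and each $E_i$ is incompressible), so that $A\cap E$ is a union of arcs whose endpoints are exactly the points of $\gamma_0\cap\partial E\subset\omega_1\cup\omega_2\cup\omega_3$. The key tool is the following observation about where an outermost disk of $A$ can sit. If an outermost arc $\beta$ of $A\cap E_i$ cuts off a subdisk $A^0\subset A$ with $\partial A^0=\beta\cup\beta'$, $\beta'\subset\gamma_0$, and if $A^0\subset H_i$, then $\beta'=A^0\cap\Sigma_{0,6}$ is a single component of $\gamma_0\cap E_i'$; whenever that component separates the two punctures of $E_i'$, the loop $\beta\cup\beta'$ separates the two endpoints of $\epsilon_i$ on $\partial H_i$, so the properly embedded arc $\epsilon_i$ must cross $A^0$, contradicting $A\cap\epsilon=\emptyset$. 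Thus an outermost disk can cap a window-strand inside $H_i$ only through a non-separating (i.e. $j_i$-avoiding) component of $\gamma_0\cap E_i'$, which is precisely the configuration the hypothesis $m_1=m_3=0$ has forced in $E_1'$ and $E_3'$.

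From here I would track the two strands of each $l_{11}$ arc. Following $\gamma_0$ off window $1$ into $E_1'$ and $E_3'$, the conditions $m_1=m_3=0$ make these arcs mutually parallel $l_1$-to-$k_1$ (resp. $l_3$-to-$k_3$) arcs, so the subdisks of $A$ capping them in $H_1$ and $H_3$ are all of the non-separating type above. Capping the two window-$1$ strands of an $l_{11}$ arc then produces an outermost boundary compression of $A$ that deletes a pair of $l_{11}$ arcs without meeting $\epsilon$ and without creating a bigon, so that the resulting curve still bounds an essential disk; iterating drives $x_{11}$ to $0$, contradicting $x_{11}\geq 2$. The step I expect to be the main obstacle is exactly this last one: verifying, through a case analysis over the finitely many standard diagrams of Figure~\ref{p7}, that under $m_1=m_3=0$ the compression always removes $l_{11}$ arcs (rather than only $l_{12}$ or $l_{13}$ arcs) and returns $\gamma_0$ to standard position, and equivalently that the $\geq 2$ many $l_{11}$ strands cannot all be capped consistently once both $j_1$ and $j_3$ are missed. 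Confirming that this case list is exhaustive, and that each reduction preserves the property of bounding an essential disk, is where the genuine work lies.
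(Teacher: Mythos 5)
There is a genuine gap, and it starts with the statement itself. In Lemma~\ref{T32} the symbols $m_1$ and $m_3$ are the weights of arc types $1$ and $3$ in the standard diagrams of Figure~\ref{p7}: the sentence ``Let $m_i$ be the number of the arcs which are isotopic to arc type $i$'' immediately following the lemma is the operative definition, and the later uses of the lemma (e.g.\ ``$m_1'+m_3'=0$\ldots\ violates Lemma~\ref{T1} and Lemma~\ref{T32}'' inside the proof of Lemma~\ref{T35}) only parse with that reading. You have instead taken $m_1=|\gamma_0\cap j_1|=u_1^++w_1^+$ from the Dehn-parameterization section, which is a different quantity attached to the auxiliary arcs $j_1,k_1$ of Figure~\ref{E2}. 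So even a completed version of your argument would prove a different assertion from the one the paper needs. (The paper itself offers no written proof to compare against --- Lemma~\ref{T32} is quoted from \cite{3} --- so the only available check is against how the lemma is used downstream, and your reading does not survive that check.)

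Beyond the misreading, the proof is not closed on its own terms. The preliminary reductions are fine: $x_{11}\geq 2$, $x_{22}=x_{33}=x_{23}=0$, and the observation that an outermost subdisk of $A$ lying in $H_i$ cannot be attached along a component of $\gamma_0\cap E_i'$ that separates the two punctures of $E_i'$ (such a disk would have odd mod $2$ intersection with $\epsilon_i$) is correct and is the standard tool for lemmas of this type. But the actual content --- that under the hypotheses the boundary compressions can be organized to strip off $l_{11}$ arcs rather than others, that each compression preserves standard position and the property of bounding an essential disk, and that iterating forces $x_{11}=0$ against $x_{11}\geq 2$ --- is exactly the step you yourself flag as ``where the genuine work lies'' and leave unverified. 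Since that case analysis over the finitely many standard diagrams is the whole lemma, what you have is a plausible strategy outline, not a proof.
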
 

Let $m_i$ be the number of the arcs which are isotopic to arc type $i$. We note that there are two non-isotopic arc types $i'$ and $i''$ for some arc type $i$, where $i'$ is the upper arc type of $i$ and $i''$ is the lower arc type of $i$. Then we define $m_{i'}$ and $m_{i''}$ which are the weight of each of the arc types. So, $m_i=m_{i'}+m_{i''}$.\\

Now, we have the following three lemmas which support the main theorem.

\begin{Lem}[\cite{3}]\label{T33}
 $\gamma_0$ bounds an essential disk in $B^3-\epsilon$ if and only if $(\delta_1\delta_2^{-1})^{\pm 1}(\gamma_0)$  does, where $\delta_1$ and $\delta_2$ be half Dehn twists counter clockwise supported on two punctured disks $C_1$ and $C_2$ respectively. (Refer to Figure~\ref{p8}.)
\end{Lem}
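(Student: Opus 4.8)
Write $\phi=\delta_1\delta_2^{-1}$, and let $\hat{\delta_1},\hat{\delta_2}$ be isotopy extensions of $\delta_1,\delta_2$ to $B^3$, exactly as was done for the $\hat{\sigma_i}$. The plan is to reduce the whole statement to one fact: that $\phi$ extends to an orientation-preserving homeomorphism $\Phi$ of the pair $(B^3,\epsilon)$ with $\Phi|_{\Sigma_{0,6}}=\phi$. Granting such a $\Phi$, the lemma follows at once. If $\gamma_0=\partial D$ for an essential disk $D$ in $B^3-\epsilon$, then $\Phi$ restricts to a self-homeomorphism of $B^3-\epsilon=B^3-\Phi(\epsilon)$, so $\Phi(D)$ is again a properly embedded essential disk and its boundary is $\Phi(\gamma_0)=\phi(\gamma_0)$; hence $\phi(\gamma_0)$ bounds an essential disk. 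Applying the identical argument to $\Phi^{-1}$, which restricts to $\phi^{-1}=\delta_2\delta_1^{-1}$ on $\Sigma_{0,6}$, gives the reverse implication and simultaneously handles both signs in $(\delta_1\delta_2^{-1})^{\pm1}$. Thus the content of the lemma is exactly the existence of $\Phi$.

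To build $\Phi$ I would invoke Theorem~\ref{T2} rather than construct it by hand. Set $F=\hat{\delta_2}^{-1}$ and $G=\hat{\delta_1}^{-1}$, so that $G^{-1}F$ restricts to $\delta_1\delta_2^{-1}=\phi$ on the boundary. By Theorem~\ref{T2}, $\mathbb{T}_F\approx\mathbb{T}_G$ precisely when $G^{-1}F(\partial E)=\phi(\partial E)$ bounds essential disks in $B^3-\epsilon$; and by Lemma~\ref{T3} it is enough to verify this for two of the three curves $\partial E_1,\partial E_2,\partial E_3$. Once $\mathbb{T}_F\approx\mathbb{T}_G$ is established, unwinding the definition of $\approx$ produces an orientation-preserving homeomorphism $h:(B^3,F(\epsilon))\to(B^3,G(\epsilon))$ that is the identity on $\partial B^3$; then $\Phi:=G^{-1}hF$ is an orientation-preserving self-homeomorphism of the pair $(B^3,\epsilon)$ with $\Phi|_{\Sigma_{0,6}}=\delta_1\delta_2^{-1}=\phi$, which is exactly the map needed in the first paragraph. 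The geometric reason the hypothesis holds is that the two half twists are supported on the $2$-punctured disks $C_1,C_2$ of Figure~\ref{p8}, positioned so that $\hat{\delta_1}$ and $\hat{\delta_2}$ perform the same half turn on a common pair of strands of $\epsilon$; their difference therefore returns $\epsilon$ to itself and leaves the standard essential curves $\partial E_i$ carrying essential disks.

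The step I expect to be the main obstacle is precisely this last verification: checking, directly from the configuration of $C_1$ and $C_2$ in Figure~\ref{p8}, that $\phi(\partial E_i)$ still bounds an essential disk in $B^3-\epsilon$ for two values of $i$. Concretely this means tracking the images $\delta_1\delta_2^{-1}(\partial E_i)$ through the two half twists and recognizing them, after isotopy, as curves bounding disks disjoint from $\epsilon$ --- equivalently, confirming that the crossing introduced by $\hat{\delta_1}$ on the relevant strands is undone by the crossing removed by $\hat{\delta_2}^{-1}$. A minor additional point is to arrange the compressing disks capping $\partial C_1$ and $\partial C_2$ to be disjoint, and to confirm that $\Phi$ is orientation-preserving, so that both the appeal to Theorem~\ref{T2} and the push-forward of essential disks in the first paragraph are legitimate.
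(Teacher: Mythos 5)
The paper itself gives no proof of Lemma~\ref{T33}: it is quoted from~\cite{3}, so there is no in-text argument to compare against, and your proposal must stand on its own. Its overall structure is sound. If there is an orientation-preserving homeomorphism $\Phi$ of the pair $(B^3,\epsilon)$ restricting to $\delta_1\delta_2^{-1}$ on $\Sigma_{0,6}$, then pushing essential disks forward by $\Phi$ and by $\Phi^{-1}$ gives both implications and both signs at once, and your factorization $\Phi=G^{-1}hF$ with $F=\hat{\delta_2}^{-1}$, $G=\hat{\delta_1}^{-1}$ is a legitimate way to manufacture $\Phi$ once the hypothesis of Theorem~\ref{T2} is known to hold.

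The genuine gap is that this hypothesis --- that $\delta_1\delta_2^{-1}(\partial E_i)$ bounds an essential disk in $B^3-\epsilon$ for two values of $i$ --- is exactly the special case $\gamma_0=\partial E_i$ of the lemma you are trying to prove. Your argument is therefore a reduction of the general statement to that special case, not a proof of it, and the special case is where all the geometry lives: it must be settled by a direct computation with the disks $C_1,C_2$ of Figure~\ref{p8}, either by tracking $\delta_1\delta_2^{-1}(\partial E_i)$ and exhibiting the essential disk it bounds, or equivalently by exhibiting an isotopy of $\hat{\delta_1}\hat{\delta_2}^{-1}(\epsilon)$ back to $\epsilon$ rel $\partial B^3$. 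You flag this step as ``the main obstacle'' but do not carry it out, and nothing short of it can close the argument: the analogous claim for $\delta_1$ alone, or for $\delta_1\delta_2$ in place of $\delta_1\delta_2^{-1}$, is false, so the conclusion really does hinge on the specific placement of $C_1$ and $C_2$ and on the opposite signs of the two half twists. A complete proof would either perform that verification or, more economically, write down the self-homeomorphism of $(B^3,\epsilon)$ extending $\delta_1\delta_2^{-1}$ by hand (for instance as a composition of twists of the balls $H_i$ with an ambient isotopy), which would bypass Theorem~\ref{T2} entirely.
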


\begin{figure}[htb]
\includegraphics[scale=.5]{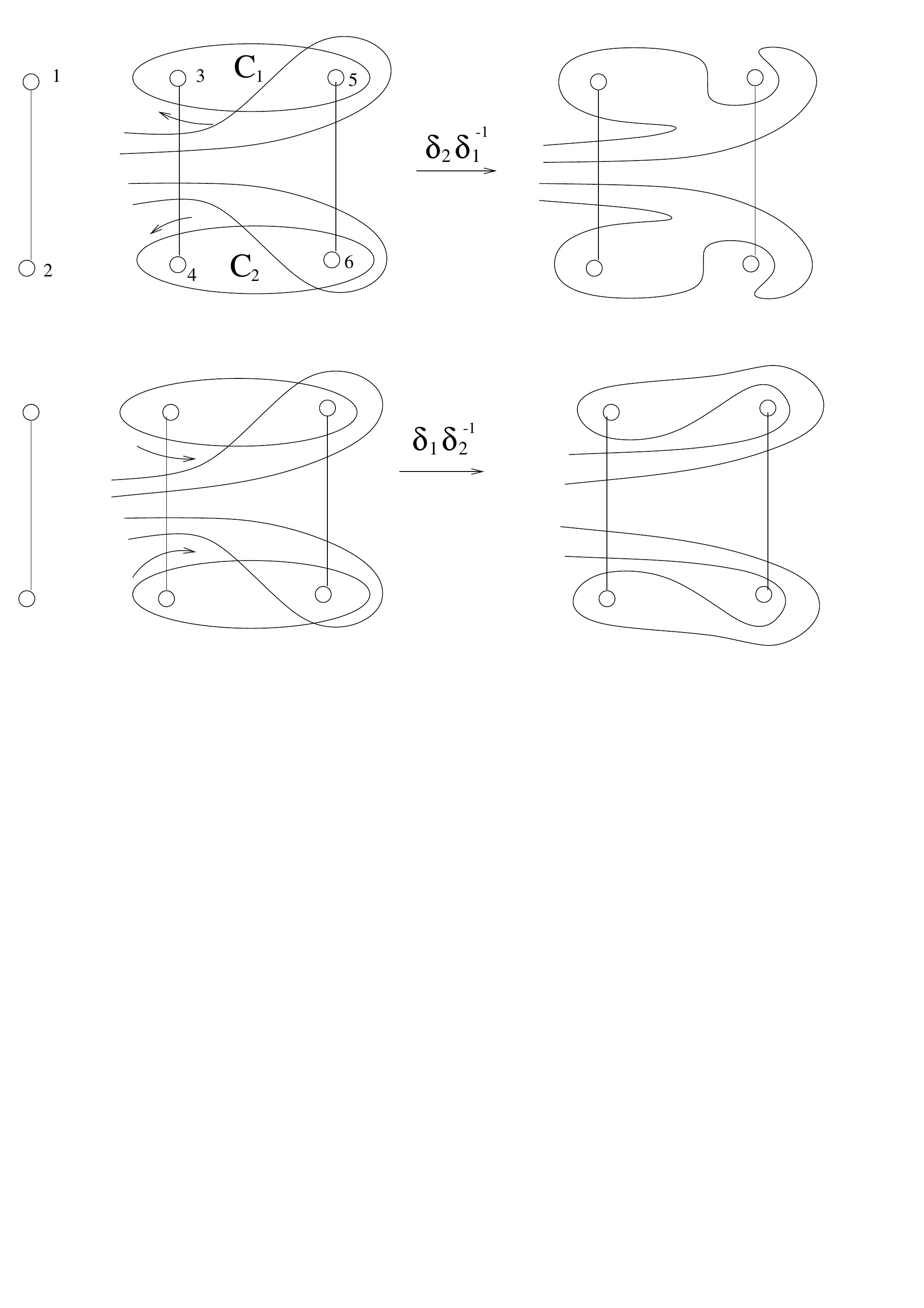}
\vskip -200pt
\caption{}\label{p8}
\end{figure}

\begin{Lem}[\cite{3}]\label{T34}
$\gamma_0$ bounds an essential disk in $B^3-\epsilon$ if and only if $\delta_3^{\pm 1}(\gamma_0)$  does, where $\delta_3$ be a half Dehn twist counter clockwise supported on the punctured disk $E_4'$. (Refer to Figure~\ref{p9}.)
\begin{figure}[htb]
\includegraphics[scale=.6]{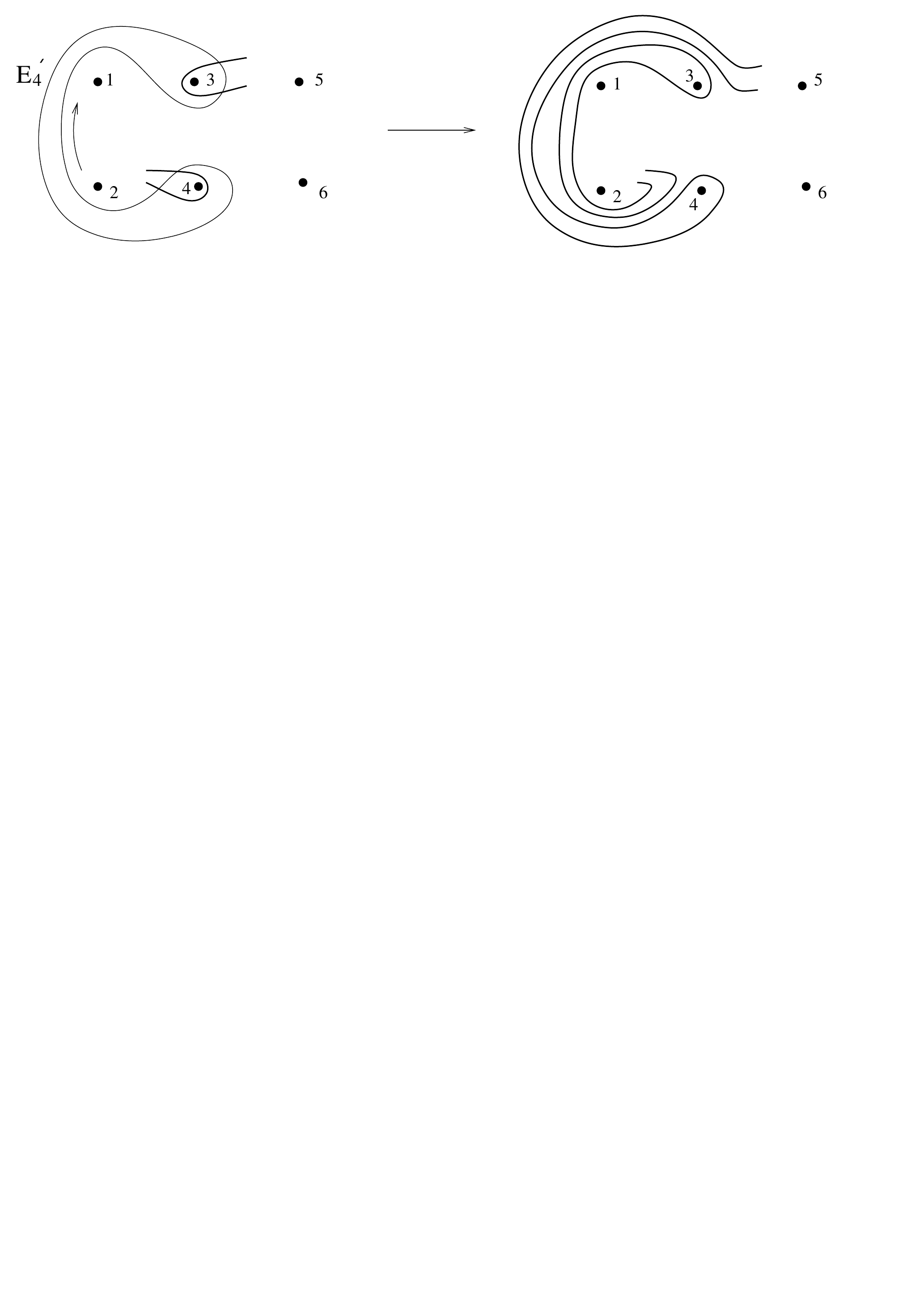}
\vskip -400pt
\caption{}\label{p9}
\end{figure}
\end{Lem}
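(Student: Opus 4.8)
The plan is to reduce the statement to the construction of an ambient extension. Specifically, I would show that $\delta_3$ is the restriction to $\Sigma_{0,6}$ of an orientation preserving self-homeomorphism $\hat{\delta_3}$ of the pair $(B^3,\epsilon)$. Granting such an extension, the lemma is immediate: if $A$ is an essential disk in $B^3-\epsilon$ with $\partial A=\gamma_0$, then $\hat{\delta_3}(A)$ is a disk in $B^3-\epsilon$ with $\partial(\hat{\delta_3}(A))=\delta_3(\gamma_0)$, and it is essential because $\hat{\delta_3}|_{\Sigma_{0,6}}=\delta_3$ is a homeomorphism and hence sends essential curves to essential curves. Applying $\hat{\delta_3}^{-1}$ gives the converse implication, and using $\hat{\delta_3}$ versus $\hat{\delta_3}^{-1}$ disposes of the two signs in $\delta_3^{\pm 1}$. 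So the entire content lies in producing $\hat{\delta_3}$.

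To build the extension I would first read off from Figure~\ref{p9} that $\partial E_4'$ bounds an essential disk $D_4$ in $B^3-\epsilon$ meeting $\partial B^3$ only along $\partial E_4'$, so that $E_4'\cup D_4$ bounds a $3$-ball $W\subset B^3$. The key observation is that $W$ then meets $\epsilon$ in a single unknotted arc: since $D_4\subset B^3-\epsilon$ is disjoint from $\epsilon$, any strand of $\epsilon$ entering $W$ through a puncture of $E_4'$ cannot cross $D_4$ and so must terminate at the other puncture of $E_4'$; as $E_4'$ encloses exactly two punctures, $W\cap\epsilon$ is a single arc $\epsilon_{i_0}$ joining them. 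Because $(W,\epsilon_{i_0})$ is thus a trivial one-strand tangle, the half twist $\delta_3$ of $E_4'$ extends across $W$ to an isotopy extension $\hat{\delta_3}$, supported in $W$, which restricts to $\delta_3$ on $E_4'$, to the identity on $D_4$ (and on $B^3\setminus W$), and which preserves $\epsilon_{i_0}$ setwise. This is precisely the mechanism by which the generators $\hat{\sigma_i}$ are obtained as isotopy extensions of the $\sigma_i$, so $\hat{\delta_3}$ is an orientation preserving self-homeomorphism of $(B^3,\epsilon)$ with $\hat{\delta_3}|_{\Sigma_{0,6}}=\delta_3$, as required.

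The main obstacle is exactly the geometric input above, namely that $\partial E_4'$ bounds an essential disk in $B^3-\epsilon$ cutting off a single trivial arc; this is the configuration that must be extracted from Figure~\ref{p9}, and it is what distinguishes the present single twist from the situation of Lemma~\ref{T33}. There the disks $C_1,C_2$ do not cut off a single arc, so $\delta_1$ and $\delta_2$ fail to extend individually, and one is forced to use the combination $\delta_1\delta_2^{-1}$ (which realizes a full disk twist along a disk-bounding curve, disjoint from $\epsilon$) in order to obtain an extension. Should $E_4'$ in fact fail to cut off a single arc, the sub-ball argument would break down and one would instead have to track the effect of $\delta_3$ directly on the intersection data $x_{ij}$ of $\gamma_0$ with $\partial E$, reconstructing a bounding disk for $\delta_3(\gamma_0)$ from one for $\gamma_0$ by a band-sum argument in the spirit of Lemma~\ref{T3}; I expect, however, that the configuration in Figure~\ref{p9} is arranged precisely so that the clean extension argument applies.
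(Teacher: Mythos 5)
The paper gives no proof of this lemma at all --- it is simply quoted from \cite{3} --- so there is no in-paper argument to compare against; your extension argument is the natural one and, in my judgement, correct: once $\delta_3$ is realized as the restriction of a self-homeomorphism $\hat{\delta_3}$ of the pair $(B^3,\epsilon)$, both implications and both signs follow immediately, exactly as you say. The one step you assert rather than justify is that the arc $W\cap\epsilon$ cut off by $D_4$ is boundary-parallel in $W$: this is not automatic for an arbitrary arc in a ball, but it does hold here, because cutting the genus-$3$ handlebody $B^3-N(\epsilon)$ along the essential disk $D_4$ yields handlebodies, so the exterior of $W\cap\epsilon$ in $W$ is a genus-one handlebody, i.e.\ a solid torus, which forces the arc to be trivial; with that detail supplied, the half twist of $E_4'$ swapping the two punctures extends over the trivial one-string tangle $(W,W\cap\epsilon)$ rel $D_4$, and your proof is complete. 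Your closing remark correctly identifies the contrast with Lemma~\ref{T33}, where the individual twists $\delta_1,\delta_2$ do not extend and only the combination $\delta_1\delta_2^{-1}$ does.
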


\begin{Lem}\label{T35}
Suppose that $\gamma_0$ is in standard position with $m_3>0$. If $m_{2'}=m_{8'}=m_{11'}=0$ and $m_{8''}+m_{11''}>0$, then $\gamma_0$ does not bound an essential disk in $B^3-\epsilon$. 
\end{Lem}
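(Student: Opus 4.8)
The plan is to argue by contradiction, assuming $\gamma_0$ bounds an essential disk $A$ in $B^3-\epsilon$ and extracting an incompatibility from the arc-type hypotheses. Since $m_3>0$, the crude necessary condition $m_1+m_3>0$ of Lemma~\ref{T32} already holds, so this statement must detect a finer obstruction; the role of the hypotheses $m_{2'}=m_{8'}=m_{11'}=0$ together with $m_{8''}+m_{11''}>0$ is to isolate a \emph{lower} arc of type $8$ or $11$ that has no admissible way to be completed into $\partial A$ once its upper companions are forbidden.

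The core of the argument I would run directly on $A$, imitating the proof of Lemma~\ref{T1}. The disks $E_1,E_2,E_3$ cut $A$ along the arcs $A\cap E$ into subdisks, and an outermost such subdisk is a bigon whose two sides are an outermost arc of $\gamma_0\cap I$ and a properly embedded arc in some $E_i$; as in Lemma~\ref{T1}, the $E_i$-side cannot produce a bigon inside $H_i$, since such a bigon would have to meet the puncture $\epsilon_i$. I would then read off, from the standard diagrams of Figure~\ref{p7}, exactly which arc types can appear as the $\gamma_0$-side of such an innermost bigon, and check the local matching across each window of $\partial E$. The goal is to show that a lower arc of type $8$ or $11$ can only be completed across the relevant window by an upper arc of type $2$, $8$, or $11$; with all of these excluded by hypothesis, and with $m_3>0$ obstructing the only remaining route, no innermost bigon can be formed, so $A$ cannot exist. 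Where convenient, I would first normalize $\gamma_0$ using the twist-invariance of Lemmas~\ref{T33} and~\ref{T34}, which preserve the disk-bounding property, so as to reduce the number of arc-type cases that must be checked.

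The main obstacle is the bookkeeping of arc types: I must enumerate from Figure~\ref{p7} precisely how the upper and lower versions of types $2$, $8$, $11$ and type $3$ match up across the windows, and verify that the listed hypotheses are exactly enough to block \emph{every} admissible completion as well as every re-isotopy of $A$. The delicate point is to ensure that the weights $m_{2'}$, $m_{8''}$, $m_{11''}$, $m_3$ interact with no hidden cancellation that would silently restore a disk-bounding configuration; this is the step on which the whole lemma turns.
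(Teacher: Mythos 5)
Your proposal is a plan rather than a proof, and the plan's central mechanism is unlikely to close. The step on which everything rests --- the claim that a lower arc of type $8$ or $11$ can only be ``completed across the relevant window'' by an upper arc of type $2$, $8$, or $11$, so that the hypotheses block every innermost bigon --- is asserted but never verified, and it is not what actually obstructs the disk. The paper's proof does not run an outermost-bigon argument on $A$ at all. It works entirely with global weight (connectivity) equations at the nested disks $E_i''$ together with an infinite-descent argument. For instance, when $m_{2''}=0$ and $m_{8''}=0$, the contradiction is purely arithmetic: connectivity in $E_3''$ forces $m_{10''}=m_{11''}+m_9$ while connectivity in $E_1''$ forces $m_9>m_{10''}+m_{11''}$, giving $m_9>m_9$. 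No local matching of arc types across a single window detects this; one must count all parallel copies simultaneously. Likewise, when $m_{8''}$ and $m_{10''}$ are both positive the configuration is not locally contradictory: the paper must repeatedly apply $\delta_1^{-1}\delta_2$ (invoking Lemma~\ref{T33}) and track how the weights transform, descending until either a base configuration is reached (excluded by a fundamental-group or puncture-count argument) or the case $m_{2''}>0$ is entered, which in turn requires applying $\delta_3$ (Lemma~\ref{T34}), the inequality $m_3>m_{2''}$ forced by Lemmas~\ref{T1} and~\ref{T32}, and a further three-way split. Your one sentence about normalizing via Lemmas~\ref{T33} and~\ref{T34} gestures at this, but the twisting is not a preprocessing step: it is the engine of the proof, and without specifying the descent and why it terminates you have no argument.

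A second, smaller problem: the outermost-bigon analysis of Lemma~\ref{T1} only yields the existence of some $i$ with $x_{ii}\geq 2$; it cannot distinguish the finer types $2'$, $2''$, $8'$, $8''$, $11'$, $11''$, because those distinctions live in the decomposition by the nested disks $E_i''$ and the region $I'$, not in the pair of pants $I$ where the bigon argument takes place. So even the starting point of your reduction is not available in the form you need it.
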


\begin{proof}
Suppose that $\gamma_0$ is in standard position with $m_3>0 $ and $m_{2'}=m_{8'}=m_{11'}=0$ and $m_{8''}+m_{11''}>0$. \\ 

Now, we consider several subcases for this. We note that $m_9>0$. If not, then there is no output either from $E_1''^+$ or from $E_3''^-$. Then, it contradicts the connectivity of arcs in $E_1''$ or $E_3''$.\\

 First, assume that $m_{2''}=0$.\\

We claim that $m_{10''}>0$ and $m_{8''}>0$ if $m_{11''}=0$. The proof is as follows; If both $m_{10''}$ and $m_{8''}$ are zero, then $\gamma_0$ cannot be a simple closed curve because of the connectivity of arcs in $E_3''$.
If $m_{10''}>0$ and $m_{8''}=0$ ($m_{11''}=0$) then the uppermost arc of type $10''$ and the lowermost arc of type 9 make the simple closed curve $\gamma_0$. We easily can check that it does not bound an essential disk in $B^3-\epsilon$ by the fundamental group argument. If $m_{8''}>0$ and $m_{10''}=0$ ($m_{11''}=0$) then  we can take the uppermost arc of type 9 and the lowermost arc of type $8''$ and the outermost arc of type 3 which  make the simple closed curve $\gamma_0$.
 However, we can check that $\gamma_0$ does not bound an essential disk in $B^3-\epsilon$ because $\gamma_0$ encloses exactly three punctures in $\Sigma_{0,6}$ and it is impossible to bound an essential disk in $B^3-\epsilon$. Therefore,  $m_{10''}>0$ and $m_{8''}>0$ if $m_{11''}=0$.\\
 
 \begin{figure}[htb]
\includegraphics[scale=.7]{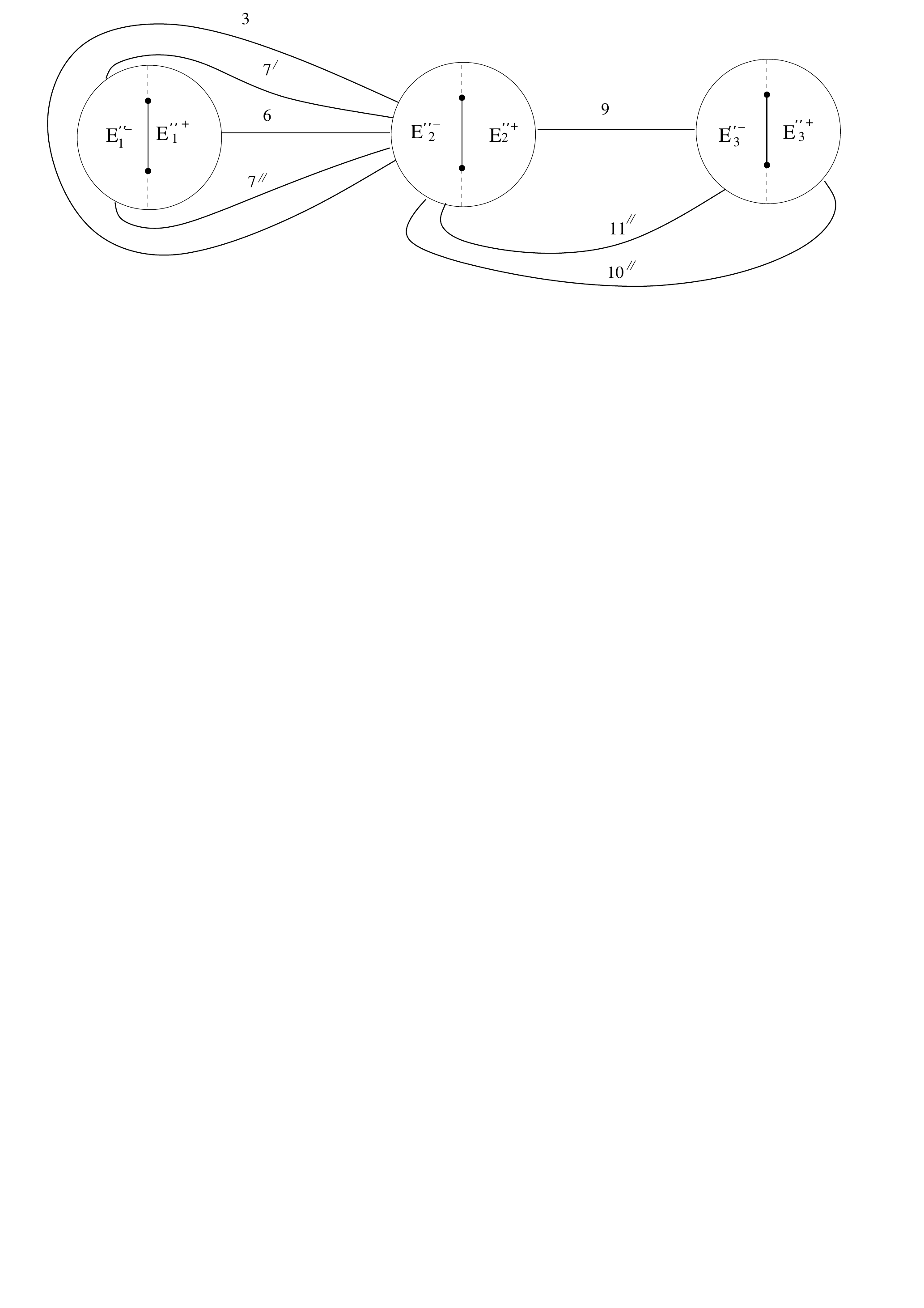}
\vskip -450pt
\caption{}
\label{p10}
\end{figure}

 We also note that $m_{10''}>0$ and $m_{11''}>0$ if $m_{8''}=0$ by a similar argument in the previous claim. It is impossible that  both $m_{8''}$ and $m_{11''}$ are positive since they intersect. So, we have the following two cases.\\

Case 1. Suppose that $m_{8''}=0$. Then we have the diagram as in Figure~\ref{p10}. So, we have two conditions $m_{10''}=m_{11''}+m_9$ for $E_3''$ and $m_9>m_{10''}+m_{11''}$ for $E_1''$. This implies that $m_9>m_{11''}+m_9+m_{11''}\geq m_9$. It makes a contradiction.\\

Case 2. Suppose that $m_{11''}=0$. Then  we have the diagram as in Figure~\ref{p11}.  
Let $m_{8''}=a$ and $m_{10''}=b$. Then we can consider the three subcases as follows.

\begin{figure}[htb]
\includegraphics[scale=.7]{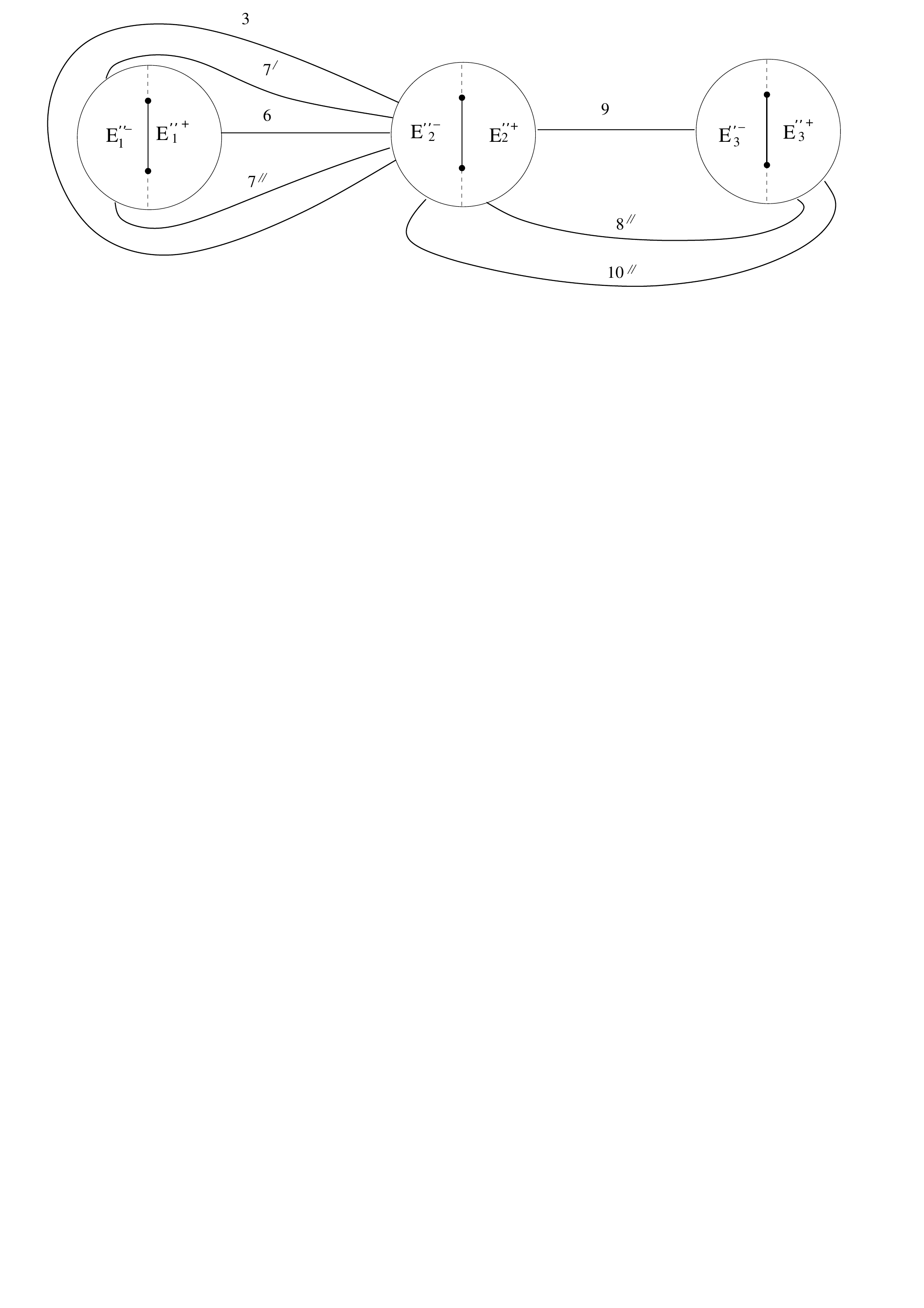}
\vskip -450pt
\caption{}
\label{p11}
\end{figure}
\begin{enumerate}
\item $m_{8''}= m_{10''}$ : Apply the homeomorphism $\delta_1^{-1}\delta_2$ to $\gamma_0$. Then we have a case that has $m_2=m_{10}=m_{11}=0$. However, we already knew that $\delta_1^{-1}\delta_2(\gamma_0)$ does not bound an essential disk in $B^3-\epsilon$. By Lemma~\ref{T33}, we see that $\gamma_0$ also does not bound an essential disk in $B^3-\epsilon$.\\

\item $m_{8''}<m_{10''}$ : We also apply the homeomorphism $\delta_1^{-1}\delta_2$ to $\gamma_0$. Then we have the same diagram as in Figure~\ref{p11}. Let $m'_{i}$ be the weights of the newly obtained diagram by  applying $\delta_1^{-1}\delta_2$. Then we have $m'_{8''}=m_{8''}$ and $m'_{10''}=m_{10''}-m_{8''}$. If $m'_{10''}\geq m'_{8''}$, then we return to one of the cases $(1)$ and $(2)$. If $m'_{10''}< m'_{8''}$, then we move to  the next case.\\

\item $m_{8''}>m_{10''}$ : We apply the homeomorphism $\delta_1^{-1}\delta_2$ to $\gamma_0$. Then 
$\delta_1^{-1}\delta_2(\gamma_0)$ has $m'_{8''}=m_{10''}, m'_{10''}=0$ and $m'_{2''}=m_{8''}-m_{10''}>0$. So, we move to the case that $m_{2''}>0$.\\
\end{enumerate}

Second, we assume that $m_{2''}>0$.\\

 Then, we notice that $m_{10''}=m_{11''}=0$ since the arcs for $m_{10''}$ and $m_{11''}$ intersect with the arc for $m_{2''}$. Let $m_{2''}=a, m_3=b$ and $m_{8''}=c$ for convenience. Also, let $m'_i$ be the weights of the new diagram by applying $\delta_3$ or $\delta_1^{-1}\delta_2$.
\begin{figure}[htb]
\includegraphics[scale=.9]{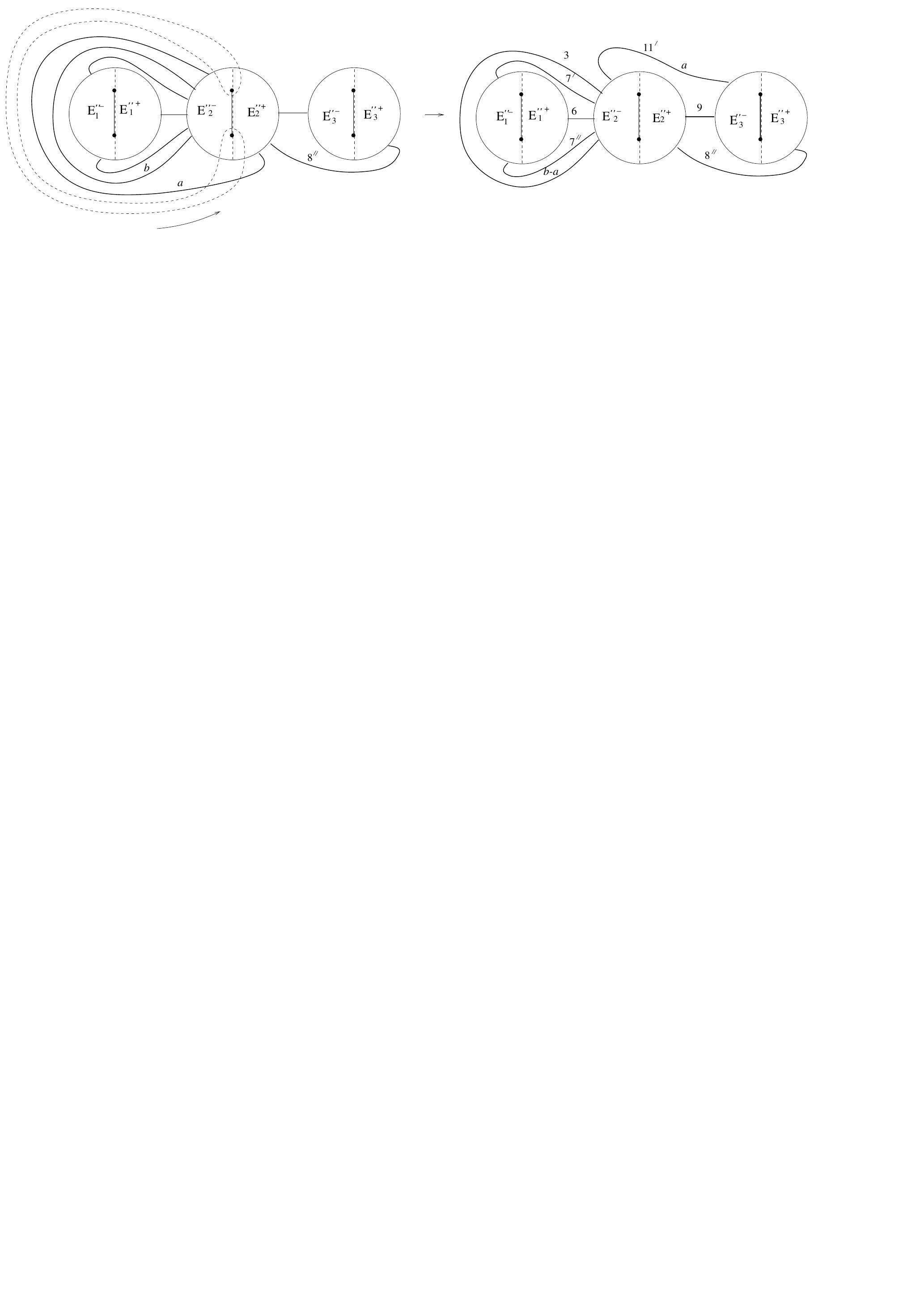}
\vskip -630pt
\caption{$m_{2''}>0$}
\label{p12}
\end{figure}

We apply the half Dehn twist $\delta_3$ to $\gamma_0$. Here, we note that $b>a$. If not, then $\delta_3(\gamma_0)$ has $m'_1+m'_3=0$. It violates Lemma~\ref{T1} and Lemma~\ref{T32}.\\

Then $\delta_3(\gamma_0)$ has the new weights $m'_3=b-a$ and $m'_{11'}=a$ as the second diagram of Figure~\ref{p12}.
Now, we consider the three subcases.

\begin{enumerate}
\item $a = c$ : Then $m'_9 = 0$. This implies that $m'_3 = 0$ and it contradicts the condition  to bound an essential disk in $B^3-\epsilon$.\\

\item $a > c$: It violates the condition $c\geq a$ for the connectivity in $E_3''$.\\

\begin{figure}[htb]
\includegraphics[scale=.9]{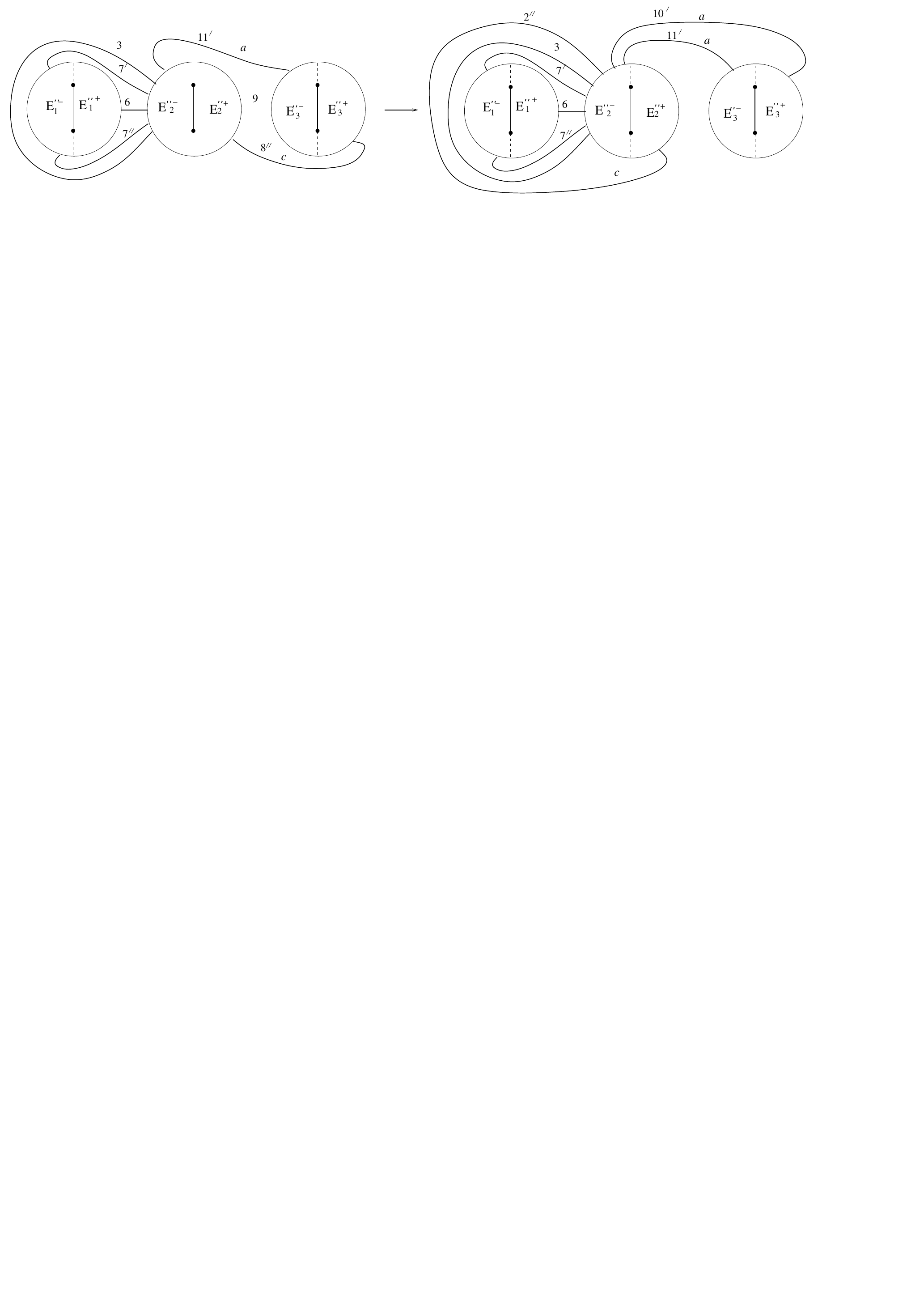}
\vskip -630pt
\caption{}
\label{p13}
\end{figure}

\item $a<c$: Apply the homeomorphism $\delta_1^{-1}\delta_2$ to $\delta_3(\gamma_0)$. Let $\gamma_1=\delta_1^{-1}\delta_2(\delta_3(\gamma_0))$. Let $m''_i$ be the weights for $\gamma_1$. Then we have $m''_3=b-a > a$. Otherwise, $\delta_3(\gamma_1)$ does not bound an essential disk in $B^3-\epsilon$. This implies that $\gamma_1$ also does not bound an essential disk in $B^3-\epsilon$.  We also have $m_{2''}=c$ and $m_{10'}=m_{11'}=a$ for $\gamma_1$ as the second diagram of Figure~\ref{p13}. This implies that $m_9=0$ because of the connectivity in $E_3''$. Therefore, $m_3=0$ because of the connectivity in $E_2''$. This violates the condition that $m_3>0$. Therefore, if $\gamma$ has $m_{2''}>0$ then $\gamma$ does not bound an essential disk in $B^3-\epsilon$.\\

 These complete this lemma. 

\end{enumerate}
\end{proof}

Now, we will prove the main theorem.

\begin{Thm}\label{T}
Suppose that two rational 3-tangle diagrams $T_F, T_G\in \mathcal{T}_{\{\hat{\sigma_1},\hat{\sigma_2},\hat{\sigma_3}\}}$.  $T_F\approx T_G$ if and only if   $[G^{-1}F(\partial E_1)]= [\partial E_1]$ and $(G^{-1}F(\epsilon_2)\cup G^{-1}F(\epsilon_3),B^3)\approx (\epsilon_2\cup \epsilon_3,B^3)$.
\end{Thm}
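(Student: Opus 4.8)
The plan is to derive Theorem~\ref{T} from Theorem~\ref{T2}, which already reduces tangle equivalence to a disk-bounding condition: $T_F\approx T_G$ if and only if $G^{-1}F(\partial E)$ bounds essential disks in $B^3-\epsilon$. Writing $H=G^{-1}F$ and $h=g^{-1}f$, I would note that since $f$ and $g$ are words in $\sigma_1,\sigma_2,\sigma_3$, so is $h$, and $H(\partial E_i)$ restricts to $h(\partial E_i)$ on $\Sigma_{0,6}$. The whole task is then to show that, for such $h$, the statement ``each $h(\partial E_i)$ bounds an essential disk'' is equivalent to the conjunction of (A) $[H(\partial E_1)]=[\partial E_1]$ and (B) $(H(\epsilon_2)\cup H(\epsilon_3),B^3)\approx(\epsilon_2\cup\epsilon_3,B^3)$. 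Throughout I would use the remark after Theorem~\ref{T2} together with Lemma~\ref{T3}, so that it suffices to control $h(\partial E_1)$ and the pair $h(\partial E_2),h(\partial E_3)$.

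For the forward direction I would assume $T_F\approx T_G$, so by Theorem~\ref{T2} every $h(\partial E_i)$ bounds an essential disk. To get (A) I would place $h(\partial E_1)$ in standard position as in the construction preceding Lemma~\ref{T32}; Lemma~\ref{T1} gives $x_{ii}\ge 2$ for some $i$, say $x_{11}\ge 2$, and then Lemmas~\ref{T32}--\ref{T35} restrict the standard diagram. The effect of the restriction to $\sigma_1,\sigma_2,\sigma_3$ (in particular the absence of the twist $\sigma_4$ on $E_3'$) is that $h(\partial E_1)$ still encloses the pair of punctures of strand $1$, so it cannot be isotopic to $\partial E_2$ or $\partial E_3$; combined with the normal-form reduction under the disk-preserving moves $\delta_1\delta_2^{-1}$ and $\delta_3$ of Lemmas~\ref{T33} and~\ref{T34}, the only disk-bounding standard curve left is $\partial E_1$ itself, giving (A). For (B) I would cut $B^3$ along the essential disk bounded by $h(\partial E_1)\sim\partial E_1$; this splits off strand $1$ in a trivial ball and displays $h(\epsilon_2)\cup h(\epsilon_3)$ as a $2$-tangle in the complementary ball, and since $h(\partial E_2)$ and $h(\partial E_3)$ bound disjoint essential disks there, that $2$-tangle is split by two meridians and is therefore trivial, i.e.\ $(H(\epsilon_2)\cup H(\epsilon_3),B^3)\approx(\epsilon_2\cup\epsilon_3,B^3)$.

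For the reverse direction I would assume (A) and (B). From (A), $h(\partial E_1)\sim\partial E_1$ bounds the essential disk $E_1$. From (B), an ambient isotopy of $B^3$ rel $\partial$ carries $h(\epsilon_2)\cup h(\epsilon_3)$ to $\epsilon_2\cup\epsilon_3$ and hence produces essential disks bounded by $h(\partial E_2)$ and $h(\partial E_3)$ in $B^3-\epsilon$; using Lemma~\ref{T4} to see these are disjoint, non-parallel essential curves and Lemma~\ref{T3} to recover the third, I would conclude that $H(\partial E)$ bounds essential disks, so Theorem~\ref{T2} yields $T_F\approx T_G$.

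The main obstacle is the implication (A) in the forward direction, namely upgrading ``$h(\partial E_1)$ bounds an essential disk'' to ``$h(\partial E_1)\sim\partial E_1$''. This is exactly what the technical Lemmas~\ref{T33}--\ref{T35} are built for: one must verify that, under the disk-preserving moves $\delta_1\delta_2^{-1}$ and $\delta_3$, every disk-bounding standard curve with $x_{11}>0$ reduces to $\partial E_1$, checking all the $m_{i'},m_{i''}$ cases (the configuration excluded by Lemma~\ref{T35} being the representative hard case), and that the generator restriction forbids strand $1$ from being permuted into strands $2$ or $3$. The remaining bookkeeping with Theorem~\ref{T2} and Lemma~\ref{T3} is then formal.
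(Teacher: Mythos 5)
Your overall skeleton (reduce to Theorem~\ref{T2}, then prove (A) $h(\partial E_1)\sim\partial E_1$ and (B) separately) matches the paper, but the step that carries all the content --- (A) in the forward direction --- rests on a claim that is not an invariant of the situation. You assert that because $h$ is a word in $\sigma_1,\sigma_2,\sigma_3$ only, the curve $h(\partial E_1)$ ``still encloses the pair of punctures of strand $1$.'' A half Dehn twist supported on a two-punctured disk that contains exactly one of the punctures enclosed by a curve changes the set of punctures the image encloses, and the supports $A$ and $B$ of $\sigma_1,\sigma_2$ do straddle the disks $E_i'$ (otherwise these generators could not produce braiding); indeed the paper's own proof explicitly allows a priori that $G^{-1}F(\partial E_1)$ could be isotopic to $\partial E_2$. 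The invariant the paper actually extracts from the three-generator restriction is different and concrete: a fixed simple arc $l$ joining punctures $2$ and $5$ (punctures untouched by all three generators) with $|\partial E_1\cap l|=0$ and $|\gamma\cap l|=0\Rightarrow|\sigma_i^{\pm1}(\gamma)\cap l|=0$. Disjointness from $l$ cuts the disk-bounding candidates down to the finite list of configurations $(A),(B),(C)$ of Figure~\ref{p14}, which are then eliminated by Lemma~\ref{T35}-type arguments and connectivity. Without some such invariant your ``normal-form reduction under $\delta_1\delta_2^{-1}$ and $\delta_3$'' cannot conclude that $\partial E_1$ is the only surviving curve --- for the full generating set including $\sigma_4$ it is simply false that every disk-bounding image of $\partial E_1$ is isotopic to $\partial E_1$, so the generator restriction must enter in an essential, verifiable way.

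There is a second gap in the reverse direction. From (B) you get a boundary-fixing homeomorphism carrying $h(\epsilon_2)\cup h(\epsilon_3)$ to $\epsilon_2\cup\epsilon_3$; this shows $h(\partial E_2)$ and $h(\partial E_3)$ bound essential disks in $B^3-(\epsilon_2\cup\epsilon_3)$, but those disks may pass through $\epsilon_1$, so ``bounds an essential disk in $B^3-\epsilon$'' does not follow immediately. The paper bridges exactly this with the band-sum construction: since $h(\partial E_1)\sim\partial E_1$ bounds a disk splitting off $\epsilon_1$, Lemma~\ref{T4} lets one slide $h(\partial E_3)$ over the punctures $\{1,2\}$, so that bounding a disk in the complement of $\epsilon_2\cup\epsilon_3$ becomes equivalent to bounding one in the complement of all of $\epsilon$. (Also, Lemma~\ref{T4} concerns band sums of disjoint curves; it does not assert the disjointness/non-parallelism you cite it for.) With the arc $l$ and the band-sum step supplied, your outline would align with the paper's proof, but as written both directions have holes at the decisive points.
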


\begin{proof}
\begin{figure}[htb]
\includegraphics[scale=.9]{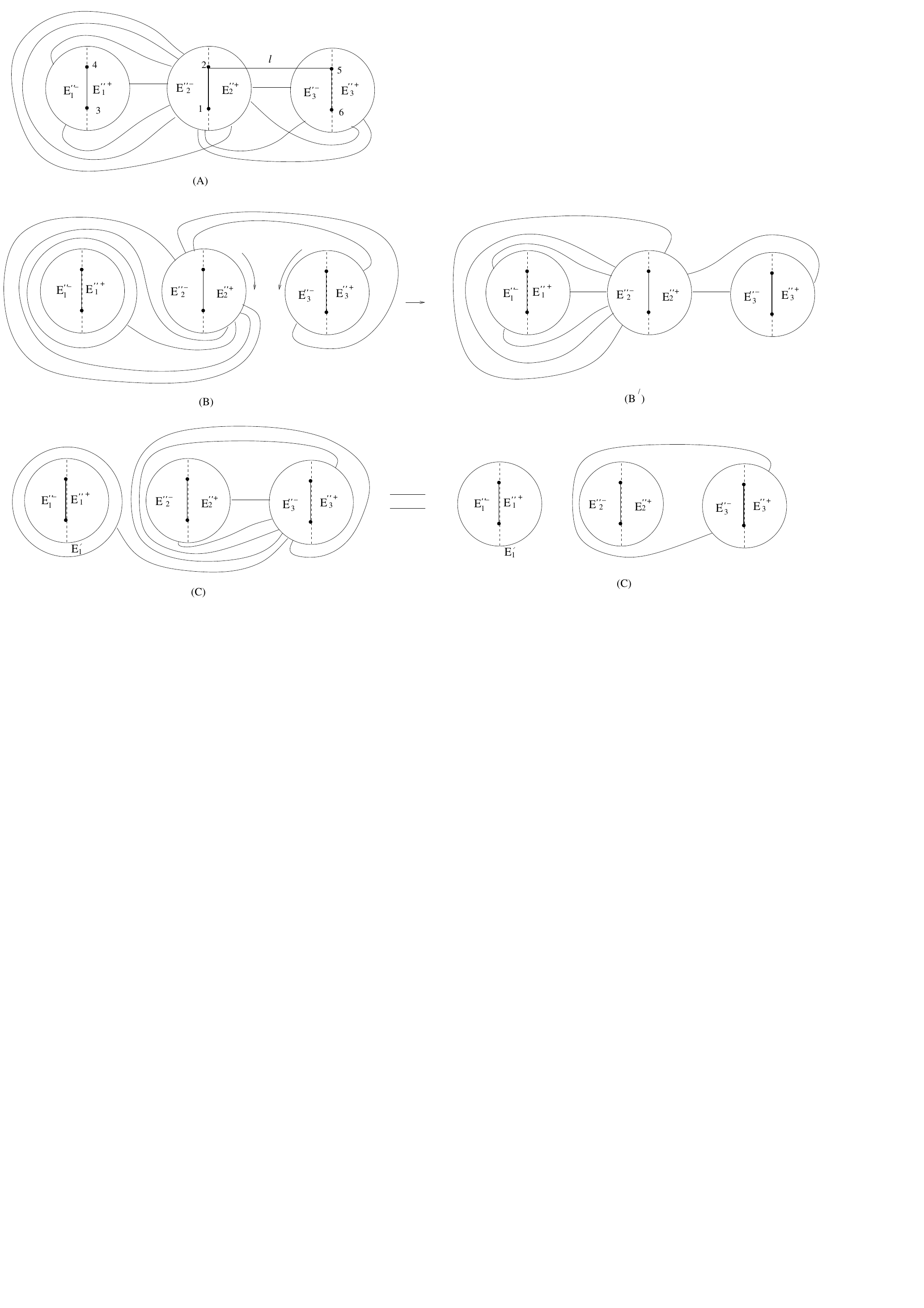}
\vskip -410pt
\caption{}
\label{p14}
\end{figure}

Since $T_F\approx T_G$, we know that $T_{G^{-1}F}\approx T_{id}$. By Lemma~\ref{T2}, $G^{-1}F(\partial E_1)$ bounds an essential disk in $B^3-\epsilon$. So, $G^{-1}F(\partial E_1)$ is isotopic to either $\partial E_2$ or a simple closed curve $\gamma_0$ which has $x_{ii}>0$ for some $i$. Let $l$ be a simple arc between the puncture $2$ and the puncture $5$ as in the first diagram of Figure~\ref{p14}.
Let $|A\cap B|$ be the minimal intersection number between $A$ and $B$.\\

Claim: $|\gamma_0\cap l|=0$.\\

\textit{Proof of Claim.} 
 We note that $|\partial E_1\cap l|=0$. For a simple closed curve $\gamma$, if $|\gamma\cap l|=0$ then $|\sigma_i^{\pm 1}(\gamma)\cap l|=0$ for $i=1,2,3$. This completes the proof of this claim. \\

Now, we has some possible cases $(A),(B)$ and $(C)$ for $\gamma_0$ which has $|\gamma_0\cap l|=0$.\\

We note that any simple closed curve for the cases $(A)$ and $(B')$ cannot bound an essential disk in $B^3-\epsilon$ by using the similar argument in Lemma~\ref{T35}. In the case $(B)$, we get $(B')$ from $(B)$ by applying half Dehn twists supported on $E_1''$ and $E_3''$ with the given direction as in the diagram $(B)$.\\

Also, it is not difficult to show that the any simple closed curves for the case $(C)$ cannot bound an essential disk in $B^3-\epsilon$ by considering the connectivity of arcs in $E_i'$. Therefore, $G^{-1}F(\partial E_1)$ should be isotopic to $\partial E_1$.\\

Now, it is clear that if  $T_F\approx T_G$ then   $[G^{-1}F(\partial E_1)]= [\partial E_1]$ and $(G^{-1}F(\epsilon_2)\cup G^{-1}F(\epsilon_3),B^3)\approx (\epsilon_2\cup \epsilon_3,B^3)$ by the previous argument.\\

To show the other direction, assume that $T_F\not\approx T_G$ and $[G^{-1}F(\partial E_1)]= [\partial E_1]$. We will show that $(G^{-1}F(\epsilon_2)\cup G^{-1}F(\epsilon_3),B^3)\not\approx (\epsilon_2\cup \epsilon_3,B^3)$\\

We consider $G^{-1}F(\partial E_3)$. We note that $G^{-1}F(\partial E_3)$ is non-parallel, disjoint with $G^{-1}F(\partial E_1)$ which is isotopic to $\partial E_1$ since $\partial E_1$ and $\partial E_3$ are disjoint. \\

Consider the band sum as in Fugure~\ref{p15}. We note that the band sum slides over the two punctures $\{1,2\}$.\\

\begin{figure}[htb]
\includegraphics[scale=.7]{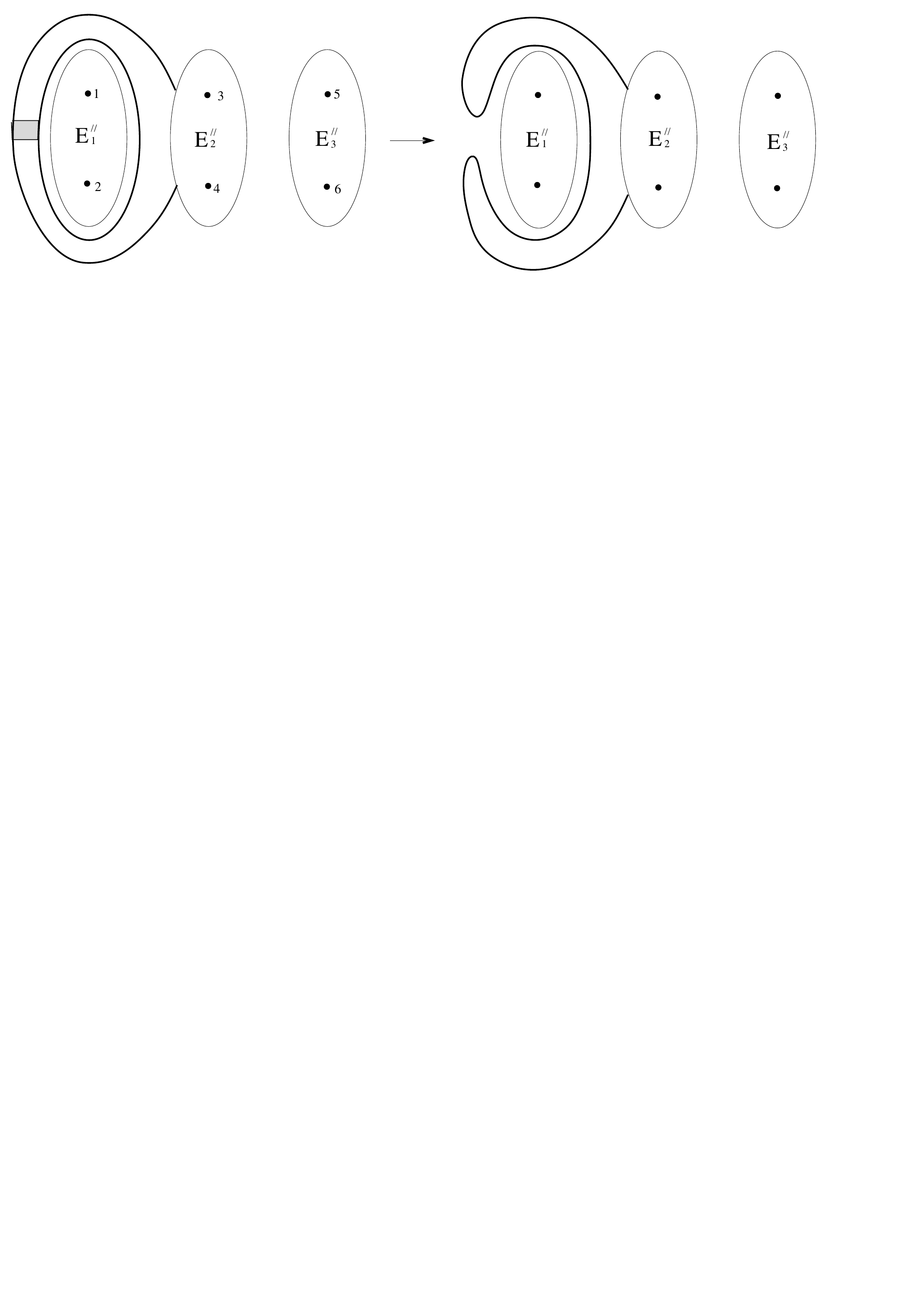}
\vskip -450pt
\caption{}
\label{p15}
\end{figure}

Also, by Lemma~\ref{T4}, $G^{-1}F(\partial E_3)+_R\partial E_1$ is non-parallel, disjoint with both $G^{-1}F(\partial E_3)$ and $\partial E_1$.
Especially, since $G^{-1}F(\partial E_3)$ is disjoint, non-parallel with $\partial E_1$ up to isotopy, we can use the band sum anytime we want. This implies that we can ignore the two punctures $\{1,2\}$ when we check whether or not $G^{-1}F(\partial E_3)$ bounds an essential disk in $B^3-\epsilon$. Also, we note that there is unique simple closed curve which bounds an essential disk in $B^3-(\epsilon_2\cup \epsilon_3)$ and especially it is isotopic to $\partial E_3$.\\

 So, if $(G^{-1}F(\epsilon_2)\cup G^{-1}F(\epsilon_3),B^3)\approx (\epsilon_2\cup \epsilon_3,B^3)$ then we note that  $G^{-1}F(\partial E_3)$ does bound an essential disk in $B^3-\epsilon$ since $\partial E_1$ does. Then we have $T_{G^{-1}F}$ is the $\infty$-tangle diagram.
 This implies that $T_F\approx T_G$. This contradicts the assumption that $T_F\not\approx T_G$.
 Therefore, $(G^{-1}F(\epsilon_2)\cup G^{-1}F(\epsilon_3),B^3)\not\approx (\epsilon_2\cup \epsilon_3,B^3)$ and it completes the proof.\\

 This completes this Theorem.

\end{proof}

 \begin{Cor}\label{C}
Suppose that two rational 3-tangle diagrams $T_F, T_G\in \mathcal{T}_{\{\hat{\sigma_1},\hat{\sigma_2},\hat{\sigma_3}\}}$.  $T_F\sim T_G$ if and only if  $[F(\partial E_1)]= [G(\partial E_1)]$ and $(F(\epsilon_2)\cup F(\epsilon_3),B^3)\approx (G(\epsilon_2)\cup G(\epsilon_3),B^3)$.
\end{Cor}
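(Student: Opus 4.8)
The plan is to deduce Corollary~\ref{C} from Theorem~\ref{T} by a direct translation between the two equivalence relations, $\approx$ (isotopy of the underlying rational $3$-tangles rel boundary) and $\sim$ (isotopy of tangle diagrams). First I would recall how the two relations are linked: by definition $T_F\sim T_G$ precisely when the tangles $\mathbb{T}_F=(F(\epsilon),B^3)$ and $\mathbb{T}_G=(G(\epsilon),B^3)$ they project from satisfy $\mathbb{T}_F\approx \mathbb{T}_G$, which in the notation of the theorem is exactly $T_F\approx T_G$. So the content of the corollary is to show that the intrinsic criterion of Theorem~\ref{T}, stated in terms of the composite $G^{-1}F$, is equivalent to the two ``symmetric'' conditions $[F(\partial E_1)]=[G(\partial E_1)]$ and $(F(\epsilon_2)\cup F(\epsilon_3),B^3)\approx (G(\epsilon_2)\cup G(\epsilon_3),B^3)$.

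Next I would invoke Theorem~\ref{T}, which gives $T_F\approx T_G$ if and only if $[G^{-1}F(\partial E_1)]=[\partial E_1]$ and $(G^{-1}F(\epsilon_2)\cup G^{-1}F(\epsilon_3),B^3)\approx (\epsilon_2\cup\epsilon_3,B^3)$. The main step is then to check that each of these two $G^{-1}F$-conditions is equivalent to the corresponding symmetric condition of the corollary. For the disk condition, since $G$ is an orientation-preserving homeomorphism of $\Sigma_{0,6}$ (equivalently, $G$ restricted to the boundary data), applying $G$ to both sides shows $[G^{-1}F(\partial E_1)]=[\partial E_1]$ holds if and only if $[F(\partial E_1)]=[G(\partial E_1)]$; that is, $G$ carries isotopy classes of curves bijectively, so the identity $G^{-1}F(\partial E_1)\sim \partial E_1$ is equivalent to $F(\partial E_1)\sim G(\partial E_1)$. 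For the tangle condition, one observes that $G$ (extended to $B^3$) is a self-homeomorphism of the pair, hence $(G^{-1}F(\epsilon_2)\cup G^{-1}F(\epsilon_3),B^3)\approx (\epsilon_2\cup\epsilon_3,B^3)$ if and only if $(F(\epsilon_2)\cup F(\epsilon_3),B^3)\approx (G(\epsilon_2)\cup G(\epsilon_3),B^3)$, because applying the homeomorphism $G$ to a pair-isotopy produces a pair-isotopy between the images and vice versa.

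The subtle point to handle carefully — and the place I expect the main obstacle — is the compatibility of the two ``applying $G$'' steps with the relation $\approx$, which requires the ambient homeomorphism to be the identity on $\partial B^3$. Here $F$ and $G$ are isotopy extensions of boundary homeomorphisms $f,g$ that agree with the fixed puncture data, so the conjugation by $G$ does restrict correctly on the boundary and preserves the class of the $\infty$-tangle; I would make this explicit by noting that $G$ acts on the set of rational tangles and on $\mathcal{C}$ equivariantly, so both equivalences above are genuine bijections of the relevant isotopy classes. Once these two equivalences are established, chaining them with Theorem~\ref{T} and the definitional identification $T_F\sim T_G \Longleftrightarrow T_F\approx T_G$ yields the corollary immediately.

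\begin{proof}
By definition of $\sim$, we have $T_F\sim T_G$ if and only if $\mathbb{T}_F\approx \mathbb{T}_G$, i.e.\ if and only if $T_F\approx T_G$. By Theorem~\ref{T}, the latter holds if and only if $[G^{-1}F(\partial E_1)]=[\partial E_1]$ and $(G^{-1}F(\epsilon_2)\cup G^{-1}F(\epsilon_3),B^3)\approx (\epsilon_2\cup\epsilon_3,B^3)$. Since $G$ is an orientation-preserving self-homeomorphism of $\Sigma_{0,6}$, it induces a bijection on isotopy classes of simple closed curves, so $[G^{-1}F(\partial E_1)]=[\partial E_1]$ if and only if $[F(\partial E_1)]=[G(\partial E_1)]$. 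Likewise, since the extension $G$ to $B^3$ is a self-homeomorphism of the pair which is the identity on $\partial B^3$, conjugating a pair-isotopy by $G$ shows that $(G^{-1}F(\epsilon_2)\cup G^{-1}F(\epsilon_3),B^3)\approx (\epsilon_2\cup\epsilon_3,B^3)$ if and only if $(F(\epsilon_2)\cup F(\epsilon_3),B^3)\approx (G(\epsilon_2)\cup G(\epsilon_3),B^3)$. Combining these equivalences gives the result.
\end{proof}
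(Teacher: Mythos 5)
Your proof is correct and is exactly the ``modification of Theorem~\ref{T}'' that the paper leaves unexplained: translate both conditions of the theorem into their symmetric forms by conjugating with $G$, using that the definitional link $T_F\sim T_G \Leftrightarrow \mathbb{T}_F\approx\mathbb{T}_G$ reduces everything to Theorem~\ref{T}. One small wording slip worth fixing: the extension $G$ of $g$ is \emph{not} the identity on $\partial B^3$ (it restricts to $g$ there); what you actually need, and what your preceding paragraph correctly supplies, is that the conjugate $GhG^{-1}$ of a rel-boundary homeomorphism $h$ is again the identity on $\partial B^3$, since $G|_{\partial B^3}\circ \mathrm{id}\circ G|_{\partial B^3}^{-1}=\mathrm{id}$.
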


\begin{proof}
It is a modification of Theorem~\ref{T}.
\end{proof}

\end{document}